\documentclass[12pt,reqno,a4paper]{amsart}
\usepackage{amsmath,amssymb,amsthm}
\usepackage{enumitem}
\usepackage[margin=2.5cm,top=3.5cm,bottom=3.5cm,footskip=1cm,headsep=1cm]{geometry}
\usepackage{amsaddr}
\usepackage[utf8]{inputenc}
\usepackage{graphicx}

\author{H. Egger$^*$ \and T. Kugler$^*$ \and N. Strogies$^\dag$}
\address{$^*$Department of Mathematics, TU Darmstadt, Germany\\%
$^\dag$Weierstrass Institut for Applied Analysis and
Stochastics, Berlin, Germany}
\email{egger@mathematik.tu-darmstadt.de}
\email{kugler@mathematik.tu-darmstadt.de}
\email{Nikolai.Strogies@wias-berlin.de}

\title[Parameter identification in a semilinear hyperbolic system]{Parameter identification \\in a semilinear hyperbolic system}


\newtheorem{lemma}{Lemma}[section]

\newtheorem{theorem}[lemma]{Theorem}

\theoremstyle{definition}
\newtheorem{remark}[lemma]{Remark}
\newtheorem*{example*}{Example}

\def\dt{\partial_t}
\def\dx{\partial_x}
\def\dtt{\partial_{tt}}
\def\dxx{\partial_{xx}}

\def\ddt{\tfrac{d}{dt}}

\def\umin{\underline{u}}
\def\umax{\overline{u}}

\def\tp{\triangle p}
\def\tpb{\triangle \bar p}

\def\RR{\mathbb{R}}

\def\eps{\varepsilon}

\numberwithin{equation}{section}
\numberwithin{table}{section}
\numberwithin{figure}{section}

\begin{document}

\begin{abstract} 
We consider the identification of a nonlinear friction law in a one-dimensional damped wave equation from additional boundary measurements.
Well-posedness of the governing semilinear hyperbolic system is established via semigroup theory and contraction arguments. 
We then investigte the inverse problem of recovering the unknown nonlinear damping law from additional boundary measurements of the pressure drop along the pipe. This coefficient inverse problem is shown to be ill-posed and a variational regularization method is considered for its stable solution. We prove existence of minimizers for the Tikhonov functional and discuss the convergence of the regularized solutions under an approximate source condition.
The meaning of this condition and some arguments for its validity are discussed 
in detail and numerical results are presented for illustration of the theoretical findings.
\end{abstract}

\maketitle

\vspace*{-1em}

\begin{quote}
\noindent 
{\small {\bf Keywords:} 
parameter identification,
semilinear wave equation, 
nonlinear inverse problem, 
Tikhonov regularization,
optimal control}

\end{quote}

\begin{quote}
\noindent
{\small {\bf AMS-classification (2000):}
35R30, 
49J20, 
49N45, 
65J22, 
74J25 
} 
\end{quote}

\section{Introduction} \label{sec:intro}

%
%
We consider a one-dimensional semilinear hyperbolic system of the form
\begin{align}
            \dt p(x,t) + \dx u(x,t) &= 0, \qquad x \in (0,1), \ t>0, \label{eq:sys1} \\
\dt u(x,t) + \dx p(x,t) + a(u(x,t)) &= 0, \qquad x \in (0,1), \ t>0, \label{eq:sys2}
\end{align}
which models, for instance, the damped vibration of a string or the propagation of pressure waves in a gas pipeline.
In this latter application, which we consider as our model problem in the sequel, 
$p$ denotes the pressure, $u$ the velocity or mass flux, and 
the nonlinear damping term $a(u)$ accounts for the friction at the pipe walls. 
The two equations describe the conservation of mass and the balance of momentum
and they can be obtained under some simplifying assumptions from the one dimensional Euler equations with friction; see e.g. \cite{BrouwerGasserHerty11,Guinot08,LandauLifshitz6}.
Similar problems also arise as models for the vibrations of elastic multistructures \cite{LagneseLeugeringSchmidt} 
or in the propagation of heat waves on microscopic scales \cite{JosephPreziosi89}.

The system \eqref{eq:sys1}--\eqref{eq:sys2} is complemented by boundary conditions 
\begin{align}
   u(0,t) = g_0(t), \quad u(1,t) = g_1(t), \qquad  t>0, \label{eq:sys3}
\end{align}
and we assume the initial values to be known and given by
\begin{align}
  p(x,0) = p_0(x), \quad u(x,0) = u_0(x), \qquad x \in (0,1). \label{eq:sys4}
\end{align}
Motivated by well-known friction laws in fluid dynamics for pipes \cite{LandauLifshitz6}, 
we will assume here that there exist positive constants $a_1,a_2$ such that 
\begin{align} \label{eq:a1} 
0 < a_1 \le a'(\xi) \le a_2 \qquad \text{for all } \xi \in \RR.   
\end{align}
In particular, friction forces are monotonically increasing with velocity.  
This condition allows us to establish well-posedness of the system \eqref{eq:sys1}--\eqref{eq:sys4}.
It is also reasonable to assume that $a(-\xi)=-a(\xi)$, i.e., 
the magnitude of the friction force does not depend on the flow direction,
and consequently we will additionally assume that $a(0)=0$.

\medskip

In this paper, we are interested in the inverse problem of determining an unknown 
friction law $a(u)$ in \eqref{eq:sys1}--\eqref{eq:sys4} from additional observation 
of the pressure drop
\begin{align}
  h(t)=\tp(t):= p(0,t) - p(1,t) , \qquad t > 0 \label{eq:sys5}
\end{align}
along the pipe. Such data are readily available in applications, e.g., gas pipeline networks. 

\medskip 

Before proceeding, let us comment on previous work for related coefficient inverse problems. 
By combination of the two equations \eqref{eq:sys1}--\eqref{eq:sys2}, one obtains 
the second order form 
\begin{align} \label{eq:second}
\dtt u - \dxx u + a'(u) \dt u = 0, \qquad x \in (0,1), \ t>0,
\end{align}
of a wave equation with nonlinear damping. 
The corresponding linear problem with coefficent $a'(u)$ replaced by $c(x)$ has been considered in \cite{Baudouin13,Bukgheim01,ImanuvilovYamamoto01}; 
uniqueness and Lipschitz stability for the inverse coefficient problem 
have been established in one and multiple space dimensions.
%
%
A one-dimensional wave equation with nonlinear source term $c(u)$ instead of $a'(u) \dt u$ 
has been investigated in \cite{CannonDuChateau83}; 
numerical procedures for the identification and some comments on the identifiability 
have been provided there.
In \cite{Kaltenbacher07}, the identification of the parameter function $c(\dx u)$ 
in the quasilinear wave equation $\dtt u - \dx ( c(\dx u) \dx u) = 0$
has been addressed in the context of piezo-electricity;
uniqueness and stability has been established for this inverse problem. 
Several results are available for related coefficient inverse problems for 
nonlinear parabolic equations; see e.g. 
\cite{CannonDuChateau73,DuChateau81,EggerEnglKlibanov05,EggerPietschmannSchlottbom15,Isakov93,Lorenzi86}.
Let us also refer to \cite{Isakov06,KlibanovTimonov} for an overview of available results and further references. 

To the best of our knowledge, the uniqueness and stability for the nonlinear coefficient problem
\eqref{eq:sys1}--\eqref{eq:sys5} considered in this paper are still open.
Following arguments proposed in \cite{EggerPietschmannSchlottbom15} for the analysis of 
a nonlinear inverse problem in heat conduction, we will derive \emph{approximate stability 
results} for the inverse problem stated above, which can be obtained by comparison with 
the linear inverse problem for the corresponding stationary problem. 
This allows us to obtain quantitative estimates for the reconstruction errors in dependence of the experimental setup,
and provides a theoretical basis for the hypothesis that uniqueness holds, 
if the boundary fluxes $g_i(t)$ are chosen appropriately. 

For the stable numerical solution in the presence of measurement errors,
we consider a variational regularization defined by
\begin{align}
J(a;p,u) &= \int_0^T |\tp(t) - h^\delta(t)|^2 dt + \alpha \|a-a^*\|^2 \to \min    \label{eq:min1}\\
         & \text{subject to } \quad \eqref{eq:sys1}-\eqref{eq:sys4} \quad \text{and} \quad \eqref{eq:a1}. \label{eq:min2}
\end{align}
This allows us to appropriately address the ill-posedness of the inverse coefficient problem \eqref{eq:sys1}--\eqref{eq:sys5}.
Here $\alpha>0$ is the regularization parameter, 
$a^*\in \RR$ is an a-priori guess for the damping law, and $h^\delta$ denotes the measurements of the pressure drop 
across the pipe for the time interval $[0,T]$. The precise form of regularization term will be specified below.

\medskip 

As a first step, we establish the well-posedness  of the system \eqref{eq:sys1}--\eqref{eq:sys4}
and prove uniform a-priori estimates for the solution. Semigroup theory for semilinear evolution 
problems will be used for that.
In addition, we also show the continuous dependence and differentiability of the states $(u,p)$ with respect to the parameter $a(\cdot)$. 

We then study the optimal control problem \eqref{eq:min1}--\eqref{eq:min2}. 
Elimination of $(p,u)$ via solution of \eqref{eq:sys1}--\eqref{eq:sys4} 
leads to  reduced minimization problem corresponding to Tikhonov regularization for 
the nonlinear inverse problem $F(a)=h^\delta$ where $F$ is the parameter-to-measurment mapping 
defined implicitly via the differential equations. 
Continuity, compactness, and differentiability of the forward operator $F$ are investigated, 
providing a guidline for the appropriate functional analytic setting for the inverse problem.
The existence and stability of minimizers for \eqref{eq:min1}--\eqref{eq:min2} then follows with standard arguments. 
In addition, we derive quantitative estimates for the error between the reconstructed 
and the true damping parameter using an \emph{approximate source condition},
which is reasonable for the problem under consideration. 
Such conditions have been used successfully for the analysis of Tikhonov regularization 
and iterative regularization methods in \cite{EggerSchlottbom11,HeinHofmann05}. 

As a third step of our analysis, we discuss in detail the meaning and the plausibility of this approximate source condition. 
We do this by showing that the nonlinear inverse problem is actually close to a linear inverse 
problem, provided that the experimental setup is chosen appropriately. 
This allows us to derive an approximate stability estimate for the inverse problem, 
and to justify the validity of the approximate source condition. 
These results suggest the hypothesis of uniqueness for the inverse problem under investigation, 
and they allow us to make predictions about the results that can be expected in practice and that are actually observed in our numerical tests.

\medskip 

The remainder of the manuscript is organized as follows: 
In Section~\ref{sec:prelim} we fix our notation and briefly discuss the underlying linear wave equation without damping.
The well-posedness of the state system \eqref{eq:sys1}--\eqref{eq:sys4} is established in Section~\ref{sec:state}
via semigroup theory. For convenience of the reader, some auxiliary results are summarized in an appendix.
In Section~\ref{sec:forward}, we then investigate the basic properties of the parameter-to-measurement mapping $F$. 
Section~\ref{sec:min} is devoted to the analysis of the regularization method \eqref{eq:min1}--\eqref{eq:min2}
and provides a quantitative estimate for the reconstruction error.
The required approximate source condition and the approximate stability of the inverse problem are discussed in Section~\ref{sec:hyp}
in detail. 
Section~\ref{sec:num} presents the setup and the results of our numerical tests.
We close with a short summary of our results and a discussion of possible directions for future research.

\section{Preliminaries} \label{sec:prelim}

Throughout the manuscript, we use standard notation for Lebesgue and Sobolev spaces and for classical functions spaces, see e.g. \cite{Evans98}. For the analysis of problem \eqref{eq:sys1}--\eqref{eq:sys4},
we will employ semigroup theory. 
The evolution of this semilinear hyperbolic system is driven by the linear wave equation
\begin{align}
\dt p(x,t) + \dx u(x,t) &= 0, \quad x \in (0,1), \ t>0, \\   
\dt u(x,t) + \dx p(x,t) &= 0, \quad x \in (0,1), \ t>0,
\end{align}
with homogeneous boundary values 
\begin{align}
u(0,t)=u(1,t)=0, \quad t>0,
\end{align}
and initial conditions given by $p(\cdot,0)=p_0$ and $u(\cdot,0)=u_0$ on $(0,1)$.
This problem can be written in compact form as an abstract evolution equation
\begin{align} \label{eq:abstract}
y'(t) + A y(t) = 0, \ t>0, \qquad y(0)=y_0,
\end{align}
with state vector $y=(p,u)$, initial value $y_0=(p_0,u_0)$, and operator $A=\begin{pmatrix} 0 & \dx \\ \dx & 0\end{pmatrix}$. \\[-1ex]
The starting point for our analysis is the following
\begin{lemma}[Generator] \label{lem:generator} 
Let $X=L^2(0,1) \times L^2(0,1)$ and $D(A)=H^1(0,1) \times H_0^1(0,1)$. \\
Then the operator $A : D(A) \subset X \to X$ generates a $C^0$-semigroup of contractions on $X$.
\end{lemma}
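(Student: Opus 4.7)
The plan is to apply the Lumer--Phillips theorem. Because $X$ is a Hilbert space and $D(A)=H^1(0,1)\times H_0^1(0,1)$ is clearly dense in $X=L^2(0,1)\times L^2(0,1)$, it suffices to verify that $\pm A$ are dissipative and that the range condition $\mathrm{Range}(\lambda I\pm A)=X$ holds for some $\lambda>0$.

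For dissipativity, I would take $y=(p,u)\in D(A)$ and integrate by parts to obtain
\begin{equation*}
\langle Ay,y\rangle_X \;=\; \int_0^1 \bigl((\dx u)\,p + (\dx p)\,u\bigr)\,dx \;=\; \int_0^1 \dx(pu)\,dx \;=\; p(1)u(1)-p(0)u(0) \;=\; 0,
\end{equation*}
using that $u\in H_0^1(0,1)$ enforces $u(0)=u(1)=0$. By polarization this gives $\langle Ay,z\rangle=-\langle y,Az\rangle$ for all $y,z\in D(A)$, so $A$ is skew-symmetric and both $\pm A$ are dissipative.

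For the range condition, given $(f,g)\in X$ I would solve $(\lambda I+A)(p,u)=(f,g)$, i.e.\ $\lambda p+\dx u=f$ and $\lambda u+\dx p=g$. Formal elimination of $u$ yields $-\dxx p+\lambda^2 p=\lambda f-\dx g$ with Neumann-type data $\dx p(i)=g(i)$ at the endpoints, which I treat variationally as: find $p\in H^1(0,1)$ with
\begin{equation*}
\int_0^1 \bigl(\dx p\,\dx\phi + \lambda^2 p\,\phi\bigr)\,dx \;=\; \int_0^1 \bigl(\lambda f\,\phi + g\,\dx\phi\bigr)\,dx \qquad \forall\phi\in H^1(0,1).
\end{equation*}
The bilinear form is bounded and coercive on $H^1(0,1)$ for every $\lambda>0$, so Lax--Milgram produces a unique $p\in H^1(0,1)$. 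I then set $u:=(g-\dx p)/\lambda\in L^2(0,1)$, which satisfies the second resolvent equation by construction; testing the variational identity against $\phi\in C_c^\infty(0,1)$ gives $-\dxx p+\lambda^2 p=\lambda f-\dx g$ in $\mathcal{D}'(0,1)$, whence $\dx u=(\dx g-\dxx p)/\lambda = f-\lambda p\in L^2(0,1)$ and therefore $u\in H^1(0,1)$.

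The main obstacle is to verify that $u$ actually lies in $H_0^1(0,1)$, since $g\in L^2$ has no pointwise traces and the homogeneous boundary values for $u$ must be recovered purely from the variational formulation. Re-inserting $u\in H^1(0,1)$ into the variational identity with arbitrary $\phi\in H^1(0,1)$ and applying integration by parts in the terms involving $u$ and $\dx p$, the bulk contributions cancel after using the distributional equation for $p$, leaving $u(1)\phi(1)-u(0)\phi(0)=0$ for every $\phi\in H^1(0,1)$, and choosing $\phi$ with prescribed endpoint values forces $u(0)=u(1)=0$. The analogous computation for $\lambda I-A$ shows that $A$ is in fact skew-adjoint, so that Lumer--Phillips (equivalently, Stone's theorem) yields that $A$ generates a $C^0$-semigroup of contractions on $X$---in fact a unitary group.
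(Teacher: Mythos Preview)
Your proof is correct and follows essentially the same route as the paper: verify density, show $(Ay,y)_X=0$ via integration by parts using $u\in H_0^1(0,1)$, and establish the range condition for $\lambda I\pm A$ to invoke Lumer--Phillips. The paper simply asserts that the resolvent system is ``by direct calculations'' uniquely solvable in $H^1\times H_0^1$, whereas you spell this out carefully via a Lax--Milgram argument for $p$ and then recover $u\in H_0^1$ from the variational identity; your additional observation that $A$ is skew-adjoint (hence generates a unitary group by Stone) is a bonus not stated in the paper.
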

\begin{proof}
One easily verifies that $A$ is a densly defined and closed linear operator on $X$. 
Moreover, $(A y, y)_X = 0$ for all $y \in D(A)$; therefore, $A$ is dissipative. 
By direct calculations, one can see that for any $\bar f, \bar g \in L^2(0,1)$, the boundary value problem\begin{align*}
\bar p(x) + \dx \bar u(x) &= \bar f(x), \quad x \in (0,1),\\ 
\bar u(x) + \dx \bar p(x) &= \bar g(x), \quad x \in (0,1),
\end{align*}
with $\bar u(0)=\bar u(1)=0$ is uniquely solvable with solution $(\bar p,\bar u) \in H^1(0,1) \times H_0^1(0,1)$.
The assertion hence follows by the Lumer-Phillips theorem \cite[Ch~1, Thm~4.3]{Pazy83}.  
\end{proof}
The analysis of the model problem \eqref{eq:sys1}--\eqref{eq:sys4} can now be done in the framework of semigroups. 
For convenience, we collect some of the required results in the appendix.

\section{The state system} \label{sec:state}

Let us return to the semilinear wave equation under consideration.
For proving well-posedness of the system \eqref{eq:sys1}--\eqref{eq:sys4},
and in order to establish some additional regularity of the solution, we will assume that 
\begin{itemize}\setlength\itemsep{1ex}
 \item[(A1)] $a \in W_{loc}^{3,\infty}(\RR)$ with $a(0)=0$, $a_0 \le a'(\cdot) \le a_1$, $|a''(\cdot)| \le a_2$, and $|a'''(\cdot)| \le a_3$
\end{itemize}
for some positive constants $a_0,a_1,a_2,a_3>0$. 
%
Since the damping law comes from a modelling process involving several approximation steps, 
these assumptions are not very restrictive in practice.
In addition, we require the initial and boundary data to satisfy 
\begin{itemize}\setlength\itemsep{1ex}
 \item[(A2)] $u_0=0$ and $p_0=c$ with $c \in \RR$;
 \item[(A3)] $g_0,g_1 \in C^4([0,T])$ for some $T>0$, $g_0(0)=g_1(0)=0$, and $g_0'(0)=g_1'(0)=0$. 
\end{itemize}
The system thus describes the smooth departure from a system at rest.
As will be clear from our proofs, 
the assumptions on the initial conditions and the regularity requirements for the parameter and the initial and boundary data 
could be relaxed without much difficulty.
Existence of a unique solution can now be established as follows.

\begin{theorem}[Classical solution] \label{thm:classical} $ $\\
Let (A1)--(A3) hold. 
Then there exists a unique classical solution 
\begin{align*} 
(p,u) \in C^1([0,T];L^2(0,1) \times L^2(0,1)) \cap C([0,T]; H^1(0,1) \times H^1(0,1))  
\end{align*}
for the initial boundary value problem \eqref{eq:sys1}--\eqref{eq:sys4} and its norm can be bounded by 
\begin{align*}
\|(p,u)\|_{C([0,T];H^1\times H^1)} + \|(p,u)\|_{C^1([0,T];L^2 \times L^2)} 
\le C'
\end{align*}
with constant $C'$ only depending on the bounds for the coefficients and the data and the time horizon $T$.  
Moreover, $\tp := p(0,\cdot)-p(1,\cdot) \in C^\gamma([0,T])$, for any $0 \le \gamma < 1/2$, and
\begin{align*}
\|\tp\|_{C^{\gamma}([0,T])} \le C'(\gamma).
\end{align*}
\end{theorem}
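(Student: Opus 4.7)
The plan is to recast the initial boundary value problem as an abstract semilinear Cauchy problem that fits the framework of Lemma~\ref{lem:generator}. First I homogenize the data by introducing the affine lift $\tilde u(x,t) := (1-x) g_0(t) + x g_1(t)$ and setting $w := u - \tilde u$, $q := p - c$. The pair $(q,w)$ then satisfies a system of the same type as \eqref{eq:sys1}--\eqref{eq:sys2}, but with vanishing initial data, homogeneous Dirichlet data $w(0,t) = w(1,t) = 0$, the nonlinear term $a(w + \tilde u)$, and a smooth forcing built from $g_0, g_1$ and their time derivatives. Writing $y = (q,w)$ this takes the form
\begin{equation*}
y'(t) + A y(t) = F(y(t), t), \qquad y(0) = 0,
\end{equation*}
with $A$ the generator from Lemma~\ref{lem:generator} on $X = L^2(0,1) \times L^2(0,1)$ and $F : X \times [0,T] \to X$ globally Lipschitz in its first argument by (A1).

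Local existence and uniqueness of a mild solution $y \in C([0,T']; X)$ for some $T' > 0$ then follow from Banach's fixed point theorem applied to the Duhamel formulation, using the contraction property of the semigroup together with the Lipschitz bound $|a(\xi) - a(\eta)| \le a_1 |\xi - \eta|$ from (A1); I would invoke the standard semilinear results collected in the appendix. Testing the equation with $y$ and using the dissipativity $(Ay,y)_X = 0$, the monotonicity of $a$, and the smoothness of the forcing, gives an energy inequality that controls $\|y(t)\|_X$ uniformly on $[0,T]$; this rules out blow-up and yields global existence together with the $X$-part of the claimed a priori bound.

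For the asserted spatial and temporal regularity, I formally differentiate the equation in $t$, set $v := y'$, and observe that $v$ satisfies a linear inhomogeneous Cauchy problem on $X$ with coefficients involving $a'(w + \tilde u)$ and forcing built from $g_0'', g_1''$. Its initial value is $v(0) = F(0,0) = 0$, and this identity is exactly where the compatibility conditions in (A2)--(A3), namely $u_0 = 0$, $g_0(0) = g_1(0) = g_0'(0) = g_1'(0) = 0$, and $a(0) = 0$, are used. A Gronwall estimate then gives $v \in C([0,T]; X)$, so that rearranging the equation yields $A y = F(y, t) - y' \in C([0,T]; X)$ and hence $y \in C([0,T]; D(A)) \cap C^1([0,T]; X)$; undoing the lift gives the stated regularity and bound for $(p,u)$. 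The $W^{3,\infty}_{loc}$ assumption on $a$ is then comfortably sufficient to control the Nemytskii operator $w \mapsto a(w + \tilde u)$ in the bootstrap, using that $H^1(0,1) \hookrightarrow L^\infty(0,1)$.

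Finally, the H\"older bound for $\tp$ is an interpolation statement: from $p \in C([0,T]; H^1(0,1)) \cap C^1([0,T]; L^2(0,1))$ one obtains $p \in C^\gamma([0,T]; H^{1-\gamma}(0,1))$ for every $\gamma \in [0,1]$, and for $\gamma < 1/2$ the embedding $H^{1-\gamma}(0,1) \hookrightarrow C([0,1])$ makes the point traces $p(0,\cdot), p(1,\cdot)$ H\"older continuous in time of exponent $\gamma$, with a constant depending only on $\gamma$, the data, and $T$. I expect the main obstacle to be the regularity/bootstrap step: promoting the abstract mild solution to a genuine classical one requires combining semigroup regularity theory, the compatibility at $t = 0$, and the Nemytskii structure of the nonlinearity into a single coherent argument, since $a(\cdot)$ is not differentiable as a map $L^2 \to L^2$ and one must work throughout in the $L^\infty$-controlled space $H^1(0,1)$.
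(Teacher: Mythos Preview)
Your overall strategy matches the paper's: homogenize the boundary data and apply the abstract semilinear results from the appendix. The execution differs in two linked respects.

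First, the paper lifts \emph{both} unknowns. Besides $\hat u(x,t)=(1-x)g_0(t)+xg_1(t)$ it sets $\hat p(x,t)=\int_0^x\hat p_x\,ds$ with $\hat p_x(x,t)=(x-1)\bigl(a(g_0(t))+g_0'(t)\bigr)-x\bigl(a(g_1(t))+g_1'(t)\bigr)$. This is not cosmetic: with this choice the second forcing component $f_2=-\partial_t\hat u-\partial_x\hat p-a(\hat u+\tilde u)$ vanishes at $x=0$ and $x=1$, so that $f$ maps $Y=D(A)=H^1\times H^1_0$ into itself. Your simpler shift $q=p-c$ does not achieve this; at $x=0$ the second component of your forcing equals $-g_0'(t)-a(g_0(t))$, which is nonzero in general, so your $f$ does not land in $H^1_0$.

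Second, precisely because $f:Y\to Y$, the paper works directly in $Y$ via Lemma~\ref{lem:classical2}, obtaining a local classical solution from the local Lipschitz continuity of $f$ in the $H^1$-norm (valid since $H^1(0,1)\hookrightarrow L^\infty$); global existence and the uniform bound then follow from Lemma~\ref{lem:classical3} by estimating $\|\tfrac{d}{dt}f(t,y(t))\|_X$. This sidesteps exactly the obstacle you flag in your last paragraph: your route through a mild solution in $X$ followed by formal time differentiation cannot invoke Lemma~\ref{lem:classical} as stated, because the Nemytskii map $w\mapsto a(w+\tilde u)$ is not Fr\'echet differentiable as a map $L^2\to L^2$. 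The fix you anticipate---working throughout in $H^1$---is indeed what the paper does, but carrying it out requires the refined lift above so that the forcing respects the $H^1_0$ boundary condition.

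Your interpolation argument for $\triangle p\in C^\gamma([0,T])$, $\gamma<1/2$, is correct and in fact more explicit than the paper, which merely asserts the bound.
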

\begin{proof}
The proof follows via semigroup theory for semilinear problems \cite{Pazy83}. 
For convenience of the reader and to keep track of the constants, we sketch the basics steps:

{\em Step 1:}
We define $\hat u(x,t) = (1-x) g_0(t) + x g_1(t)$ and set 
$\hat p(x,t) = \int_0^x \hat p_x(s,t) dx$ with $\hat p_x(x,t) = (x-1) (a(g_0(t))+g_0'(t))  -x (a(g_1(t))+g_1'(t))$.
Then we decompose the solution into $(p,u)=(\hat p,\hat u) + (\tilde p,\tilde u)$
and note that $(\hat p,\hat u) \in C^1([0,T];H^1 \times H^1)$ by construction and assumption (A3). 
The second part $(\tilde p, \tilde u)$ solves
\begin{align*}
&&&&\dt \tilde p + \dx \tilde u &= f_1, && \tilde p(\cdot,0) = \tilde p_0,&&&&\\
&&&&\dt \tilde u + \dx \tilde p &= f_2, && \tilde u(\cdot,0) = \tilde u_0,&&&&
\end{align*}
with $f_1(t)=-\dt \hat p(t) - \dx \hat u(t)$, $f_2(t,\tilde u(t))=-\dt \hat u(t)-\dx \hat p(t)- a(\hat u(t) + \tilde u(t))$
and initial values $\tilde p_0 = p_0 - \hat p(0)$, $\tilde u_0 = u_0 - \hat u(0)$.
In addition, we have $\tilde u(0,t)=\tilde u(1,t)=0$ for $t>0$.
This problem can be written as 
abstract evolution 
\begin{align*}
y'(t) + A y(t) = f(t,y(t)), \qquad y(0)=y_0,  
\end{align*}
on $X=L^2 \times L^2$ with $y=(\tilde p,\tilde u)$, $f(t,y)=(f_1(t),f_2(t,y_2))$, and $D(A)=H^1 \times H_0^1$.

{\em Step 2:}
We now verify the conditions of Lemma~\ref{lem:classical2} stated in the appendix.
By assumptions (A2) and (A3) one can see that $y_0 = (\tilde p_0,\tilde u_0) \in H^1(0,1) \times H_0^1(0,1)$.
For every $y \in H^1(0,1) \times H_0^1(0,1)$, we further have $f(t,y) = (f_1(t),f_2(t,y_2)) \in H^1(0,1) \times H_0^1(0,1)$ by construction of $\hat u$ and $\hat p$. Moreover, $f$ is continuous w.r.t. time. 
Denote by $|u|_{H^1}=\|\dx u\|_{L^2}$ the seminorm of $H^1$. Then 
\begin{align*}
|f_2(t,v) - f_2(t,w)|_{H^1} 
&= |a(\hat u(t) + v) - a(\hat u(t)+w)|_{H^1} \\
&= \int_0^1 |a'(\hat u(t) + (1-s) v + s w) (v-w)|_{H^1} ds \\
&\le a_1 |v-w|_{H^1} + a_2 |\hat u(t) + (1-s) v + s w|_{H^1} |v-w|_{H^1}.
\end{align*}
Here we used the embedding of $H^1(0,1)$ into $L^\infty(0,1)$ and the bounds for the coefficients. 
This shows that $f$ is locally Lipschitz continuous with respect to $y$, uniform on $[0,T]$.
By Lemma~\ref{lem:classical2}, we thus obtain local existence and uniqueness of a classical solution. 

{\em Step 3:}
To obtain the global existence of the classical solution, 
note that 
\begin{align*}
\|\ddt f(t,y(t))\|_X 
&\le \|\ddt f_1(t)\|_{L^2} + \|\ddt f_2(t,\tilde u(t))\|_{L^2}  \\
&\le C_1 + C_2 + \|\dt \tilde u(t)\|_{L^2}),
\end{align*}
where the first term comes from estimating $f_1$ and the other three terms from the estimate for $f_2$.
The constants $C_1,C_2$ here depend only on the bounds for the data.
Global existence of the classical solution and the uniform bound now follow from Lemma~\ref{lem:classical3}.
\end{proof}

Note that not all regularity assumptions for the data and for the parameter were required so far.
The conditions stated in (A1)--(A3) allow us to prove higher regularity of the solution,
which will be used for instance in the proof of Theorem~\ref{thm:lipschitz} later on.
\begin{theorem}[Regularity] \label{thm:regularity}
Under the assumptions of the previous theorem, we have 
\begin{align*}
\|(p,u)\|_{C^1([0,T];H^1\times H^1) \cap C^2([0,T];L^2 \times L^2)} \le C''
\end{align*}
with $C''$ only depending on the bounds for the coefficient and data, and the time horizon.
\end{theorem}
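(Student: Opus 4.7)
My plan is to differentiate the system once in time and show that the time derivatives $(q,v):=(\dt p,\dt u)$ satisfy an initial-boundary value problem of the same semilinear hyperbolic type as \eqref{eq:sys1}--\eqref{eq:sys4}, so that the argument of Theorem~\ref{thm:classical} applies verbatim to $(q,v)$. Differentiating the equations gives
\begin{align*}
\dt q + \dx v &= 0, \\
\dt v + \dx q + a'(u)\, v &= 0,
\end{align*}
with boundary values $v(0,t)=g_0'(t)$, $v(1,t)=g_1'(t)$ and initial values
$q(\cdot,0)=-\dx u_0$, $v(\cdot,0)=-\dx p_0 - a(u_0)$.
Assumption (A2) together with $a(0)=0$ from (A1) makes both initial values vanish, and assumption (A3) implies $g_0'(0)=g_1'(0)=0$; thus the compatibility conditions needed in the proof of Theorem~\ref{thm:classical} are met at the level of $(q,v)$.

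Next I would mirror \emph{Step 1} of Theorem~\ref{thm:classical} by subtracting a smooth boundary lift $\hat v(x,t)=(1-x)g_0'(t)+xg_1'(t)$ and an associated $\hat q$ built from $(g_0'',g_1'')$, $(a(g_0),a(g_1))$, so that the remaining inhomogeneity $(\tilde q,\tilde v)$ satisfies homogeneous boundary conditions, zero initial data, and an abstract evolution equation $y'+Ay=f(t,y)$ on $X=L^2\times L^2$ with the same generator $A$ of Lemma~\ref{lem:generator}. The nonlinearity now has the form
\begin{align*}
f_2^{\mathrm{new}}(t,\tilde v) = -\dt \hat v(t) - \dx \hat q(t) - a'\bigl(\hat u(t)+\tilde u(t)\bigr)\bigl(\hat v(t)+\tilde v\bigr).
\end{align*}
Here $\tilde u$ is the already-constructed classical solution, which by Theorem~\ref{thm:classical} lies in $C([0,T];H^1)$. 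Using $a'\in W^{2,\infty}$ from (A1), the chain rule shows that the multiplier $a'(\hat u+\tilde u)$ lies in $C([0,T];H^1)$ with a bound in terms of $a_1,a_2$ and the a-priori bound from Theorem~\ref{thm:classical}; hence $f^{\mathrm{new}}(t,\cdot)$ maps $H^1\times H^1_0$ into itself and is locally Lipschitz in the $H^1$-seminorm, uniformly on $[0,T]$.

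With these ingredients verified, Lemma~\ref{lem:classical2} from the appendix yields local existence of a classical solution $(\tilde q,\tilde v)\in C^1([0,T_*];L^2\times L^2)\cap C([0,T_*];H^1\times H^1_0)$, and the analogue of \emph{Step 3} of Theorem~\ref{thm:classical} (controlling $\|\ddt f^{\mathrm{new}}(t,y(t))\|_X$ in terms of $\|\dt\tilde v\|_{L^2}$, which requires the additional $a''\in L^\infty$ and $a'''\in L^\infty$ bounds from (A1) when the chain rule is applied twice) together with Lemma~\ref{lem:classical3} gives global existence and a uniform bound depending only on the data, the constants $a_0,\dots,a_3$, and $T$. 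Undoing the lift yields $(q,v)=(\dt p,\dt u)\in C([0,T];H^1\times H^1)\cap C^1([0,T];L^2\times L^2)$, which is equivalent to the claim $(p,u)\in C^1([0,T];H^1\times H^1)\cap C^2([0,T];L^2\times L^2)$.

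The only genuine obstacle I anticipate is bookkeeping the nonlinear term $a'(u)v$: one must check that it inherits the Sobolev regularity and the local Lipschitz property required by Lemma~\ref{lem:classical2} in the $H^1$-setting, and that the time derivative bound in the global-existence step does not close on itself in a Gronwall-incompatible way. This is where the stronger hypothesis $a\in W^{3,\infty}_{\mathrm{loc}}$ and the vanishing-initial/boundary compatibility assumptions from (A2)--(A3) are used essentially, but once they are in place the proof is a direct repetition of the argument already given for Theorem~\ref{thm:classical}.
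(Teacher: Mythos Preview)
Your proposal is correct and essentially identical to the paper's proof. The only cosmetic difference is that the paper reuses the lift from Theorem~\ref{thm:classical} directly by setting $(\hat r,\hat w)=(\dt\hat p,\dt\hat u)$ and $(\tilde r,\tilde w)=(\dt\tilde p,\dt\tilde u)$, rather than describing a fresh lift construction; since $\dt\hat u=(1-x)g_0'(t)+xg_1'(t)$ this is exactly your $\hat v$, and the remainder of the argument (verifying Lemma~\ref{lem:classical2} for the linear-in-$z_2$ nonlinearity $-a'(u)z_2$, then invoking Lemma~\ref{lem:classical3}) matches your Steps~2 and~3.
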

\begin{proof}
To keep track of the regularity requirements, we again sketch the main steps: 

{\em Step 1:}
Define $(r,w)=(\dt p,\dt u)$ and $(r,w) = (\hat r,\hat w) + (\tilde r,\tilde w)$ with $(\hat r,\hat w)=(\dt \hat p,\dt \hat u)$ and $(\tilde r,\tilde w)=(\dt \tilde p,\dt \tilde u)$ as in the previous proof.
The part $z=(\tilde r,\tilde w)$ can be seen to satisfy
\begin{align} \label{eq:z}
\dt z(t) + A z(t) = g(t,z(t)), \qquad z(0)=z_0,
\end{align}
with right hand side  $g(t,z)=(-\dt \hat r(t) -\dx \hat w(t),-\dt \hat w(t)-\dx \hat r(t)-a'(u(t)) z_2)$ 
and initial value $z_0=(\dt p(0)-\dt \hat p(0),\dt u(0)-\dt \hat u(0)) = (-\dx u_0-\dt \hat p(0),-\dx p_0 - a(u_0) - \dt \hat u(0))$.

{\em Step 2:}
Using the assumptions (A1)--(A3) for the coefficient and the data, and the bounds for the solution 
of Theorem~\ref{thm:classical}, and the definition of $\hat p$ and $\hat u$, 
one can see that $z_0 \in Y=H^1 \times H_0^1$ and that 
$g : [0,T] \times H^1(0,1) \times Y \to Y$ satisfies the conditions of Lemma~\ref{lem:classical2}. 
Thus $z(t)$ is a local classical solution. 

{\em Step 3:}
Similarly as in the previous proof, one can show that\begin{align*}
\|\ddt g(t,z(t))\|_X \le C_1 + C_2 \|z'(t)\|_X + C_3 \|A z(t)\|_X 
\end{align*}
for all sufficiently smooth function $z$. 
The global existence and uniform bounds for the classical solution then follow again by Lemma~\ref{lem:classical3}.
\end{proof}

\section{The parameter-to-output mapping} \label{sec:forward}

Let $u_0,p_0,g_0,g_1$ be fixed and satisfy assumptions (A2)--(A3).
Then by Theorem~\ref{thm:classical}, we can associate to any damping parameter $a$ 
satisfying the conditions (A1) the corresponding solution $(p,u)$ of problem 
\eqref{eq:sys1}--\eqref{eq:sys4}. 
By the uniform bounds of Theorem~\ref{thm:classical} and the embedding of $H^1(0,1)$ in $C[0,1]$,
we know that 
\begin{align} \label{eq:bounds}
\umin \le u(x,t) \le \umax, \qquad x \in \omega, \ 0 \le t \le T, 
\end{align}
for some constants $\umin$, $\umax$ independent of the choice of $a$. 
Without loss of generality, we may thus restrict the parameter function $a$ to the interval $[\umin,\umax]$.
We now define the parameter-to-measurement mapping, in the sequel also called \emph{forward operator}, by 
\begin{align} \label{eq:forward}
F : D(F) \subset H^2(\umin,\umax) \to L^2(0,T),
\qquad a \mapsto \tp
\end{align}
where $\tp=p(0,\cdot)-p(1,\cdot)$ is the pressure drop across the pipe and $(p,u)$ is the solution of \eqref{eq:sys1}--\eqref{eq:sys4} for parameter $a$. As domain for the operator $F$, we choose 
\begin{align} \label{eq:domain}
D(F)=\{ a \in H^2(\umin,\umax) : (A1) \text{ holds}\},
\end{align}
which is a closed and convex subset of $H^2(\umin,\umax)$.
By Theorem~\ref{thm:classical}, the parameter-to-measurment mapping is well-defined on $D(F)$.
In the following, we establish several further properties of this operator, which will be required for our analysis later on.
\begin{theorem}[Lipschitz continuity] \label{thm:lipschitz}
The operator $F$ is Lipschitz continuous, i.e., 
\begin{align*}
\|F(a) - F(\tilde a)\|_{L^2(0,T)} \le C_L \|a-\tilde a\|_{H^2(\umin,\umax)}, \qquad \forall a, \tilde a \in D(F)
\end{align*}
with some uniform Lipschitz constant $C_L$ independent of the choice of $a$ and $\tilde a$.
\end{theorem}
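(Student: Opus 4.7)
\emph{Strategy.} Given $a,\tilde a \in D(F)$ with associated solutions $(p,u)$ and $(\tilde p,\tilde u)$, I would set $q = p - \tilde p$, $v = u - \tilde u$. These differences satisfy the linear hyperbolic system
\begin{align*}
\dt q + \dx v &= 0, \\
\dt v + \dx q + \bigl(a(u) - \tilde a(\tilde u)\bigr) &= 0,
\end{align*}
with $q(\cdot,0) = v(\cdot,0) = 0$ and $v(0,t) = v(1,t) = 0$. To reach the boundary trace $q(0,t) - q(1,t)$, which equals $F(a)(t) - F(\tilde a)(t)$, I exploit the second equation to write
\[
q(0,t) - q(1,t) = -\int_0^1 \dx q(x,t)\, dx = \int_0^1 \dt v(x,t)\, dx + \int_0^1 \bigl[a(u) - \tilde a(\tilde u)\bigr]\, dx,
\]
so the task reduces to $L^2_{x,t}$ bounds on $\dt v$ and on the damping difference.

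\emph{First energy estimate.} I split the nonlinear difference as $a(u) - \tilde a(\tilde u) = \bar a(x,t)\,v + (a-\tilde a)(\tilde u)$ with $\bar a(x,t) := \int_0^1 a'(\tilde u + s v)\, ds \in [a_0, a_1]$. The system for $(q,v)$ becomes a linear wave equation with bounded dissipative perturbation and source $-(a-\tilde a)(\tilde u)$. A standard energy estimate (testing with $(q,v)$, using $v = 0$ at the boundary, and Gronwall) yields
\[
\|q(t)\|_{L^2}^2 + \|v(t)\|_{L^2}^2 \le C \int_0^T \bigl\|(a-\tilde a)(\tilde u(\cdot,s))\bigr\|_{L^2}^2\, ds \le C\, \|a - \tilde a\|_{L^\infty(\umin,\umax)}^2,
\]
and the one-dimensional Sobolev embedding $H^2(\umin,\umax) \hookrightarrow C^1([\umin,\umax])$ controls the right-hand side by $\|a - \tilde a\|_{H^2}^2$.

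\emph{Second energy estimate for $\dt v$.} Differentiating the difference system in time, $(\dt q, \dt v)$ solves the same type of wave system with the same homogeneous boundary values (since $g_0, g_1$ are common to both solutions), initial data $\dt q(\cdot,0) = 0$ and $\dt v(\cdot,0) = -(a-\tilde a)(u_0)$, and source
\[
\dt\bigl[a(u) - \tilde a(\tilde u)\bigr] = a'(u)\,\dt v + \bigl[a'(u) - a'(\tilde u)\bigr]\,\dt\tilde u + (a'-\tilde a')(\tilde u)\,\dt\tilde u.
\]
The first term is a bounded multiplier of $\dt v$ and feeds a Gronwall loop; the second is bounded by $a_2 \|v\|_{L^2} \|\dt\tilde u\|_{L^\infty}$ using the a-priori bound $\dt\tilde u \in C([0,T];H^1) \hookrightarrow C([0,T];L^\infty)$ from Theorem~\ref{thm:regularity} combined with the first energy estimate; and the third is bounded by $\|a'-\tilde a'\|_{L^\infty}\|\dt\tilde u\|_{L^2} \le C \|a-\tilde a\|_{H^2}$, again by Sobolev embedding. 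A second energy identity then produces $\|\dt v\|_{L^\infty(0,T;L^2)} \le C \|a-\tilde a\|_{H^2}$; inserting this and the first estimate into the representation of $q(0,t) - q(1,t)$ and applying Cauchy--Schwarz pointwise in $t$ delivers the claim.

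\emph{Main obstacle.} The genuine difficulty is that $q$ carries no homogeneous trace at the pressure boundary, which rules out any direct trace estimate from the basic energy identity for $(q,v)$. Passing via the time-differentiated equation is what produces the source term $(a'-\tilde a')(\tilde u)\,\dt\tilde u$, and it is precisely this term that forces the $H^2$-norm of $a - \tilde a$ (rather than just $H^1$ or $L^\infty$) on the right-hand side.
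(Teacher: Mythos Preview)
Your proof is correct and follows essentially the same route as the paper: both arguments time-differentiate the difference system and control $\dt v$ via an energy/Gronwall argument whose source term $(a'-\tilde a')(\tilde u)\,\dt\tilde u$ is what forces the $H^2$-norm of $a-\tilde a$. The only cosmetic difference is that the paper packages this step through the semigroup Lemma~\ref{lem:classical3} to obtain $\|(r,w)\|_{C([0,T];H^1\times H_0^1)}\le C\|a'-\tilde a'\|_{L^\infty}$ and then applies the trace embedding, whereas you use the explicit representation $q(0,t)-q(1,t)=\int_0^1\bigl(\dt v + a(u)-\tilde a(\tilde u)\bigr)\,dx$ together with direct energy estimates.
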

\begin{proof}
Let $a,\tilde a \in D(F)$ and let $(p,u)$, $(\tilde p,\tilde u)$ denote the corresponding classical solutions of problem \eqref{eq:sys1}--\eqref{eq:sys4}.
Then the function $(r,w)$ defined by $r=\tilde p-p$, $w=\tilde u-u$  satisfies
\begin{align*}
\dt r + \dx w &= 0, \\
\dt w + \dx r &= a(u) - \tilde a (\tilde u) =:f_2,  
\end{align*}
with initial and boundary conditions $r(x,0)=w(x,0)=w(0,t)=w(1,t)=0$. 
By Theorem~\ref{thm:classical}, we know the existence of a unique classical solution $(r,w)$.
Moreover,
\begin{align*}
\|\ddt f_2\|_{L^2} 
&\le \|(a'(u)-a'(\tilde u)) \dt u\|_{L^2} + \|(a'(\tilde u) - \tilde a'(\tilde u)) \dt u\|_{L^2} 
+ \|\tilde a'(\tilde u) (\dt u - \dt \tilde u)\|_{L^2} \\
&\le a_2 \|w\|_{L^2} \|\dt u\|_{L^\infty} + \|a'-\tilde a'\|_{L^\infty} \|\dt u\|_{L^2} + a_1 \|\dt w\|_{L^2}
\end{align*}
Using the uniform bounds for $u$ provided by Theorem~\ref{thm:classical} and \ref{thm:regularity} and similar estimates as in the proof of Lemma~\ref{lem:classical3}, one obtains
$\|(r,w)\|_{C([0,T];H^1 \times H_0^1)} \le C \|a' - \tilde a'\|_{L^\infty}$ with $C$ only depending on the bounds for the coefficients and the data and on the time horizon. The assertion then follows by noting that $F(\tilde a) -F(a) = r(0,\cdot)-r(1,\cdot)$ and the continuous embedding of $H^1(0,1)$ in $L^\infty(0,1)$ and $H^2(\umin,\umax)$ to $W^{1,\infty}(\umin,\umax)$.
\end{proof}

By careful inspection of the proof of Theorem~\ref{thm:lipschitz}, we also obtain 
\begin{theorem}[Compactness] \label{thm:compact}
The operator $F$ maps sequences in $D(F)$  weakly converging in $H^2(\umin,\umax)$ to strongly convergent sequences in $L^2(0,T)$. In particular, $F$ is compact. 
\end{theorem}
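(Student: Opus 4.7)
The plan is to observe that the proof of Theorem~\ref{thm:lipschitz} actually yields a sharper quantitative estimate than its statement. Indeed, inspecting the chain of inequalities there one sees that
\begin{align*}
\|F(a) - F(\tilde a)\|_{L^2(0,T)} \le C\, \|a' - \tilde a'\|_{L^\infty(\umin,\umax)},
\end{align*}
with $C$ depending only on the bounds for the coefficients and the data; the $H^2$-norm appears in Theorem~\ref{thm:lipschitz} only because of the continuous embedding $H^2(\umin,\umax) \hookrightarrow W^{1,\infty}(\umin,\umax)$ applied at the very last step. For the compactness claim, I would exploit that this embedding is in fact compact in one space dimension.

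So let $(a_n) \subset D(F)$ satisfy $a_n \rightharpoonup a$ in $H^2(\umin,\umax)$. First, since $D(F)$ is closed and convex in $H^2(\umin,\umax)$ (as remarked after \eqref{eq:domain}), it is weakly closed, so $a \in D(F)$ and $F(a)$ is defined. Next, by the Rellich--Kondrachov theorem on the bounded interval $(\umin,\umax)$, the embedding $H^2(\umin,\umax) \hookrightarrow C^1([\umin,\umax])$ is compact, hence $a_n \to a$ strongly in $C^1$, and in particular $\|a_n' - a'\|_{L^\infty(\umin,\umax)} \to 0$. (A priori this only holds along a subsequence, but a standard subsequence-of-subsequence argument, applied to any subsequence of $(a_n)$, upgrades this to convergence of the full sequence.) The sharpened Lipschitz bound above then gives
\begin{align*}
\|F(a_n) - F(a)\|_{L^2(0,T)} \le C \|a_n' - a'\|_{L^\infty(\umin,\umax)} \to 0,
\end{align*}
which is the first assertion.

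For the \emph{in particular}-statement, any bounded sequence in $D(F) \subset H^2(\umin,\umax)$ admits a weakly convergent subsequence (since $H^2$ is reflexive), and the argument above turns this into a strongly convergent subsequence of images in $L^2(0,T)$; thus $F$ maps bounded sets to relatively compact ones. The only real point to verify is that the proof of Theorem~\ref{thm:lipschitz} indeed controls $F(a)-F(\tilde a)$ by $\|a'-\tilde a'\|_{L^\infty}$ rather than only by the full $H^2$-norm — everything else is standard functional analysis, so there is no substantive obstacle.
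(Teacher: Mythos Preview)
Your argument is correct and essentially identical to the paper's: both extract from the proof of Theorem~\ref{thm:lipschitz} the sharper bound $\|F(a)-F(\tilde a)\|_{L^2(0,T)}\le C\|a'-\tilde a'\|_{L^\infty}$ and then invoke the compact embedding $H^2(\umin,\umax)\hookrightarrow W^{1,\infty}(\umin,\umax)$. The paper merely phrases this more tersely as ``$F$ is a composition of a continuous and a compact operator,'' while you spell out the sequential argument (including the weak closedness of $D(F)$), but the substance is the same.
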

\begin{proof}
The assertion follows from the estimates of the previous proof by noting that the embedding of $H^2(\umin,\umax)$ into $W^{1,\infty}(\umin,\umax)$ is compact. The forward operator is thus a composition of a continuous and a compact operator.
\end{proof}

As a next step, we consider the differentiability of the forward operator.
\begin{theorem}[Differentiability] \label{thm:differentiability} $ $\\
The operator $F$ is Frechet differentiable with Lipschitz continuous derivative, i.e., 
\begin{align*}
\|F'(a) - F'(\tilde a)\|_{H^2(\umin,\umax) \to L^2(0,T)} \le L \|a-\tilde a\|_{H^2(\umin,\umax)}
\qquad \text{for all } a,\tilde a \in D(F).
\end{align*}
\end{theorem}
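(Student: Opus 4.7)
The natural candidate for the derivative in direction $h \in H^2(\umin,\umax)$ is obtained by formally differentiating \eqref{eq:sys1}--\eqref{eq:sys4} with respect to $a$, which yields the linearized sensitivity system
\begin{align*}
\dt p' + \dx u' &= 0, \\
\dt u' + \dx p' + a'(u) u' + h(u) &= 0,
\end{align*}
on $(0,1)\times(0,T)$ with zero initial data and $u'(0,t)=u'(1,t)=0$, where $(p,u)$ is the state associated with $a$. My plan is: (i) establish well-posedness of this system and set $F'(a)h := p'(0,\cdot) - p'(1,\cdot)$; (ii) verify Fr\'echet differentiability via a quadratic remainder estimate; (iii) prove Lipschitz dependence of $F'(a)$ on $a$.

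For step (i) I would write the sensitivity system as an abstract evolution $z'(t) + A z(t) = g(t,z(t))$ with the generator $A$ of Lemma~\ref{lem:generator}, and a right-hand side that is \emph{linear} in $z$ with coefficients depending on the known state $u$. Since Theorems~\ref{thm:classical} and \ref{thm:regularity} provide $(p,u)\in C^1([0,T];H^1)\cap C^2([0,T];L^2)$, and since $h(u), a'(u) \in C([0,T];H^1)$ with norms bounded by $C\|h\|_{H^2}$ via the embeddings $H^2(\umin,\umax)\hookrightarrow W^{1,\infty}$ and $H^1(0,1)\hookrightarrow L^\infty$, the same semigroup argument as in Theorem~\ref{thm:classical} yields a unique classical solution $(p',u') \in C([0,T];H^1\times H_0^1) \cap C^1([0,T];L^2\times L^2)$ with $\|(p',u')\|\le C\|h\|_{H^2}$, $C$ depending only on the constants in (A1)--(A3).

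For step (ii), let $(\tilde p,\tilde u)$ be the state for $a+h$, set $(R,W) := (\tilde p-p-p',\, \tilde u-u-u')$, and note that $(R,W)$ solves the linear wave system
\begin{align*}
\dt R + \dx W &= 0, \\
\dt W + \dx R &= -\bigl[a(\tilde u)-a(u)-a'(u)(\tilde u-u)\bigr] - a'(u)W - \bigl[h(\tilde u)-h(u)\bigr]
\end{align*}
with homogeneous initial and boundary data. Taylor's theorem together with $a''\in L^\infty$ gives the pointwise bound $|a(\tilde u)-a(u)-a'(u)(\tilde u-u)| \le \tfrac{a_2}{2}|\tilde u-u|^2$, while the last bracket is controlled by $\|h'\|_{L^\infty}|\tilde u - u|\le C\|h\|_{H^2}|\tilde u-u|$. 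Combining the Lipschitz estimate $\|\tilde u-u\|_{C([0,T];H^1)} \le C\|h\|_{H^2}$ from Theorem~\ref{thm:lipschitz} with the embedding $H^1\hookrightarrow L^\infty$ and the energy estimate used in the proof of Theorem~\ref{thm:lipschitz} (through Lemma~\ref{lem:classical3}) yields $\|(R,W)\|_{C([0,T];H^1\times H^1_0)} \le C\|h\|_{H^2}^2$. The trace inequality then gives $\|F(a+h)-F(a)-F'(a)h\|_{L^2(0,T)} = O(\|h\|_{H^2}^2)$, which is Fr\'echet differentiability.

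For step (iii), given $a,\tilde a\in D(F)$ and $h\in H^2(\umin,\umax)$, I would write down the wave system satisfied by the difference of the two sensitivities $(\delta p',\delta u'):=(p'-\tilde p',u'-\tilde u')$. A short manipulation expresses its right-hand side as $(a'(u)-a'(\tilde u))u' + (a'-\tilde a')(\tilde u)u' + \tilde a'(\tilde u)\delta u' + h(u)-h(\tilde u)$. Using Theorem~\ref{thm:lipschitz} to bound $\|u-\tilde u\|$ by $\|a-\tilde a\|_{H^2}$, the embedding $H^2\hookrightarrow W^{1,\infty}$ to bound the second term, and the estimate $\|u'\|_{C([0,T];H^1)}\le C\|h\|_{H^2}$ from step (i), another application of the energy estimate produces $\|F'(a)h-F'(\tilde a)h\|_{L^2(0,T)} \le L\|a-\tilde a\|_{H^2}\|h\|_{H^2}$, which is the claimed Lipschitz continuity of $F'$.

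The main obstacle is the quadratic remainder estimate in step (ii): to obtain a bound on the trace $R(0,\cdot)-R(1,\cdot)$ in $L^2(0,T)$ one needs control of $(R,W)$ in $C([0,T];H^1\times H^1_0)$, not just in $L^2\times L^2$. This forces one to estimate the time derivative of the (nonlinear) right-hand side, invoking the higher regularity $\dtt u \in C([0,T];L^2)$ from Theorem~\ref{thm:regularity} and the full assumption $a\in W^{3,\infty}_{\mathrm{loc}}$ so that Lemma~\ref{lem:classical3} becomes applicable and delivers the quadratic scaling in $\|h\|_{H^2}$.
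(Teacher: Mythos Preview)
Your proposal is correct and follows essentially the same route as the paper: set up the sensitivity system \eqref{eq:sen1}--\eqref{eq:sen34}, obtain a classical solution with uniform bounds via Lemma~\ref{lem:classical3}, and then establish Lipschitz continuity of $F'$ by subtracting the two sensitivity systems and repeating the energy argument of Theorem~\ref{thm:lipschitz}. The paper is considerably more terse---it leaves the verification of \eqref{eq:directional} ``to the reader'' and for the Lipschitz estimate merely says to ``repeat the argument of Theorem~\ref{thm:lipschitz}'', noting that ``an additional derivative of the parameter $a$ is required''---whereas you spell out the quadratic remainder estimate and the decomposition of the right-hand side explicitly; your closing paragraph correctly pinpoints where the bound on $a'''$ from (A1) enters, which is exactly the paper's cryptic remark about the extra derivative.
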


\begin{proof}
Denote by $(p(a),u(a))$ the solution of \eqref{eq:sys1}--\eqref{eq:sys4} 
for parameter $a$ and let $(r,w)$ be the directional derivative of $(p(a),u(a))$ with respect to $a$ in direction $b$, defined by 
\begin{align} \label{eq:directional}
r = \lim_{s \to 0} \frac{1}{s} (p(a+sb)-p(a)) 
\quad \text{and} \quad  
w = \lim_{s \to 0} \frac{1}{s} (u(a+sb)-u(a)). 
\end{align}
Then $(r,w)$ is characterized by the {\em sensitivity system}
\begin{align}
\dt r + \dx w &= 0, \label{eq:sen1} \\
\dt w + \dx r &= -a'(u(a)) w - b(u(a))=:f_2\label{eq:sen2}
\end{align}
with homogeneous initial and boundary values
\begin{align}
   r(x,0) = w(x,0) = w(0,t) = w(1,t) &= 0. \label{eq:sen34}
\end{align}
The right hand side $f_2(t,w)=-a'(u(a;t)) w - b(u(a;t))$ can be shown to be continuously differentiable 
with respect to time, by using the previous results and (A1)--(A3). 
Hence by Lemma~\ref{lem:classical} there exists a unique classical solution $(r,w)$ to \eqref{eq:sen1}--\eqref{eq:sen34}.
Furthermore
\begin{align*}
\|\ddt f_2\|_{L^2}
&\le \|a''(u)\|_{L^\infty} \|\dt u\|_{L^\infty} \|w\|_{L^2} + \|a'(u)\|_{L^\infty} \|\dt w\|_{L^2} + \|b'(u)\|_{L^\infty} \|\dt u\|_{L^2}.
\end{align*}
By Lemma~\ref{lem:classical3} we thus obtain uniform bounds for $(w,r)$.
The directional differentiability of $(p(a),u(a))$ follows by verifying \eqref{eq:directional}, 
which is left to the reader. 
The function $(r,w)$ depends linearly and continuously on $b$ and continuously on $a$ which yields the continuous differentiability of $(p(a),u(a))$ with respect to the parameter $a$. 
The differentiability of the forward operator $F$ then follows by noting that $F'(a) b = r(0,\cdot)-r(1,\cdot)$. 
For the Lipschitz estimate, we repeat the argument of Theorem~\ref{thm:lipschitz}.  An additional derivative 
of the parameter $a$ is required for this last step.
\end{proof}

\section{The regularized inverse problem} \label{sec:min}

The results of the previous section allow us to rewrite the constrained minimization problem 
\eqref{eq:min1}--\eqref{eq:min2} in reduced form as \begin{align} \label{eq:tikhonov} 
J_\alpha^\delta(a) := \|F(a) - h^\delta\|_{L^2(0,T)}^2 + \alpha \|a-a^*\|_{H^2(\umin,\umax)}^2 \to \min_{a \in D(F)},
\end{align}
which amounts to Tikhonov regularization for the nonlinear inverse problem $F(a)=h^\delta$. 
As usual, we replaced the exact data $h$ by perturbed data $h^\delta$ to account for measurement errors.
Existence of a minimizer can now be established with standard arguments \cite{EnglHankeNeubauer96,EnglKunischNeubauer89}.
\begin{theorem}[Existence of minimizers]
Let (A2)--(A3) hold. 
Then for any $\alpha>0$ and any choice of data $h^\delta \in L^2(0,T)$, 
the problem \eqref{eq:tikhonov} has a minimizer $a_\alpha^\delta \in D(F)$.
\end{theorem}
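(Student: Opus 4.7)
The plan is to apply the direct method of the calculus of variations, exploiting the compactness of the forward operator established in Theorem~\ref{thm:compact} together with the weak closedness of $D(F)$ and the weak lower semicontinuity of the Hilbert-space norm.

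First I would note that $J_\alpha^\delta \ge 0$, so the infimum $j := \inf_{a \in D(F)} J_\alpha^\delta(a)$ is finite (it is also attained by, e.g., the linear admissible function $a(\xi)=a_0\xi$, giving a finite value). Let $(a_n) \subset D(F)$ be a minimizing sequence, i.e.\ $J_\alpha^\delta(a_n) \to j$. Because the regularization term dominates,
\begin{align*}
\alpha \|a_n - a^*\|_{H^2(\umin,\umax)}^2 \le J_\alpha^\delta(a_n) \le j + 1
\end{align*}
for $n$ large. Hence $(a_n)$ is bounded in $H^2(\umin,\umax)$, and reflexivity of $H^2$ yields a subsequence (not relabeled) with $a_n \rightharpoonup \bar a$ in $H^2(\umin,\umax)$.

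Next I would verify $\bar a \in D(F)$. Since the set $D(F)$ is convex (the constraints in (A1) are pointwise affine inequalities in $a$ and its derivatives) and closed in $H^2$ (as asserted after \eqref{eq:domain}), it is weakly closed in $H^2$, so $\bar a \in D(F)$. If a direct verification is desired, one may use that the uniform $W^{3,\infty}$ bounds on $a_n$ together with Arzelà-Ascoli yield $C^2$-convergence of a subsequence, preserving the pointwise bounds on $a,a',a''$; a further weak-$*$ extraction in $L^\infty$ preserves the bound on $a'''$.

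Then I would invoke Theorem~\ref{thm:compact}: weak convergence $a_n \rightharpoonup \bar a$ in $H^2(\umin,\umax)$ implies strong convergence $F(a_n) \to F(\bar a)$ in $L^2(0,T)$, so
\begin{align*}
\|F(a_n) - h^\delta\|_{L^2(0,T)}^2 \longrightarrow \|F(\bar a) - h^\delta\|_{L^2(0,T)}^2.
\end{align*}
Together with the weak lower semicontinuity of $\|\cdot - a^*\|_{H^2}^2$, this yields
\begin{align*}
J_\alpha^\delta(\bar a) \le \liminf_{n\to\infty} J_\alpha^\delta(a_n) = j,
\end{align*}
so $\bar a$ is a minimizer. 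The only nontrivial step is the compactness transfer of the data-misfit term, which is already prepared by Theorem~\ref{thm:compact}; everything else is standard. The main obstacle, if any, is the bookkeeping needed to confirm weak closedness of $D(F)$ under the pointwise constraints of (A1), and this can be handled either by invoking convexity plus norm-closedness or by the Arzelà-Ascoli / weak-$*$ extraction argument sketched above.
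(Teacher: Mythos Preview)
Your proof is correct and follows essentially the same approach as the paper: the paper invokes closedness, convexity, and boundedness of $D(F)$ together with weak continuity of $F$ (Theorem~\ref{thm:compact}) to obtain weak lower semicontinuity of $J_\alpha^\delta$, and then cites the standard result \cite[Thm.~10.1]{EnglHankeNeubauer96}, which is precisely the direct-method argument you spell out. The only minor difference is that you deduce boundedness of the minimizing sequence from the regularization term (using $\alpha>0$), whereas the paper uses that $D(F)$ itself is bounded in $H^2(\umin,\umax)$; either route works.
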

\begin{proof}
The set $D(F)$ is closed, convex, and bounded. In addition, we have shown that $F$ is weakly continuous,
and hence the functional $J_\alpha^\delta$ is weakly lower semi-continuous.  
Existence of a solution then follows as in \cite[Thm.~10.1]{EnglHankeNeubauer96}.
\end{proof}
\begin{remark}
Weak continuity and thus existence of a minimizer can be shown without the bounds for the second and third derivative of the parameter in assumption (A1). 
\end{remark}
Let us assume that there exists a true parameter $a^\dag \in D(F)$ 
and denote by $h=F(a^\dag)$ the corresponding exact data.
The perturbed data $h^\delta$ are required to satisfy 
\begin{align} \label{eq:noise}
\|h^\delta - h\|_{L^2(0,T)} \le \delta
\end{align} 
with $\delta$ being the noise level.
These are the usual assumptions  for the inverse problem.
%
In order to simplify the following statements about convergence, we also assume 
for the moment that the solution of the inverse problem is unique, i.e., that
\begin{align} \label{eq:unique}
F(a) \ne F(a^\dag) \quad \text{for all } a \in D(F) \setminus \{a^\dag\}.
\end{align}
This assumption is only made for convenience here, 
but we also give some justification for its validity in the following section. 
Under this uniqueness assumption, we obtain the following result about the convergence of the regularized solutions; see \cite{EnglHankeNeubauer96,EnglKunischNeubauer89}.
\begin{theorem}[Convergence]
Let \eqref{eq:unique} hold and $h^\delta$ be a sequence of data satisfying \eqref{eq:noise} for $\delta \to 0$.
Further, let $a_\alpha^\delta$ be corresponding minimizeres of \eqref{eq:tikhonov} with $\alpha=\alpha(\delta)$ chosen such that $\alpha \to 0$ and $\delta^2/\alpha \to 0$. 
Then $\|a_\alpha^\delta - a^\dag\|_{H^2(\umin,\umax)} \to 0$ with $\delta \to 0$.
\end{theorem}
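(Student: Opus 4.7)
The plan is to follow the standard Tikhonov convergence argument as laid out in \cite{EnglHankeNeubauer96}, exploiting the minimizing property of $a_\alpha^\delta$ together with the compactness and weak continuity of $F$ established in Theorem~\ref{thm:compact}, and finally promoting weak convergence in $H^2$ to strong convergence via a norm-plus-weak-limit argument.

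First, I would exploit the minimization inequality: since $a^\dag \in D(F)$ is admissible and $F(a^\dag)=h$, the optimality of $a_\alpha^\delta$ yields
\begin{equation*}
\|F(a_\alpha^\delta)-h^\delta\|_{L^2(0,T)}^2 + \alpha\,\|a_\alpha^\delta-a^*\|_{H^2}^2 \le \delta^2 + \alpha\,\|a^\dag-a^*\|_{H^2}^2.
\end{equation*}
Dividing by $\alpha$ and using $\delta^2/\alpha\to 0$ shows that $\|a_\alpha^\delta-a^*\|_{H^2}$ stays bounded, and looking at the first term gives $\|F(a_\alpha^\delta)-h^\delta\|_{L^2}^2 \le \delta^2 + \alpha\|a^\dag-a^*\|_{H^2}^2 \to 0$, so that $F(a_\alpha^\delta)\to h$ in $L^2(0,T)$ as $\delta\to 0$.

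Next, I pick any subsequence $(a_{\alpha_k}^{\delta_k})$ along $\delta_k\to 0$. By the bound above and reflexivity of $H^2(\umin,\umax)$, a further subsequence (not relabelled) converges weakly to some $\tilde a \in H^2(\umin,\umax)$. Since $D(F)$ is closed and convex, hence weakly closed, we have $\tilde a \in D(F)$. Theorem~\ref{thm:compact} then yields $F(a_{\alpha_k}^{\delta_k})\to F(\tilde a)$ strongly in $L^2(0,T)$, and combining with $F(a_{\alpha_k}^{\delta_k})\to h$ gives $F(\tilde a)=h=F(a^\dag)$. The uniqueness assumption \eqref{eq:unique} forces $\tilde a = a^\dag$.

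The one step requiring care is upgrading weak convergence to strong convergence in $H^2$, since $H^2$-convergence is the conclusion. For this I use weak lower semicontinuity of the norm together with the Tikhonov inequality:
\begin{equation*}
\|a^\dag-a^*\|_{H^2}^2 \le \liminf_{k\to\infty}\|a_{\alpha_k}^{\delta_k}-a^*\|_{H^2}^2 \le \limsup_{k\to\infty}\|a_{\alpha_k}^{\delta_k}-a^*\|_{H^2}^2 \le \|a^\dag-a^*\|_{H^2}^2,
\end{equation*}
where the last bound uses $\delta_k^2/\alpha_k\to 0$ in the minimization inequality. Thus $\|a_{\alpha_k}^{\delta_k}-a^*\|_{H^2}\to\|a^\dag-a^*\|_{H^2}$, and together with weak convergence in the Hilbert space $H^2(\umin,\umax)$ this implies strong convergence $a_{\alpha_k}^{\delta_k}\to a^\dag$ in $H^2$. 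A standard subsequence--subsequence argument finally shows that the full family converges, proving the theorem. The only delicate point is the norm-convergence step just outlined; continuity properties of $F$ and admissibility of limits both follow directly from the earlier sections.
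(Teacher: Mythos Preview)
Your argument is correct and is precisely the standard Tikhonov convergence proof from \cite{EnglHankeNeubauer96,EnglKunischNeubauer89} that the paper invokes by citation rather than spelling out. In particular, the weak closedness of $D(F)$, the weak-to-strong continuity of $F$ from Theorem~\ref{thm:compact}, and the norm-convergence-plus-weak-convergence step in the Hilbert space $H^2(\umin,\umax)$ are exactly the ingredients the cited references use, so there is nothing to add.
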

\begin{remark}
Without assumption \eqref{eq:unique} about uniqueness, convergence holds for subsequences and 
towards an $a^*$-minimum norm solution $a^\dag$; see \cite[Sec.~10]{EnglHankeNeubauer96} for details. 
\end{remark}
To obtain quantitative estimates for the convergence, some additional conditions on the nonlinearity of the operator $F$ and on the solution $a^\dag$ are required. 
Let us assume that 
\begin{align} \label{eq:source}
a^\dag - a^* = F'(a^\dag)^* w + e 
\end{align}
holds for some $w \in L^2(0,T)$ and $e \in H^2(\umin,\umax)$. 
Note that one can always choose $w=0$ and $e=a^\dag-a^*$, so this condition is no 
restrictio of generalty. However, good bounds for $\|w\|$ and $\|e\|$ are required 
in order to really take advantage of this splitting later on.
Assumption \eqref{eq:source} is called a \emph{approximate source condition}, and has been 
investigated for the convergence analysis of regularization methods for instance in \cite{EggerSchlottbom11,HeinHofmann05} 
By a slight modification of the proof of \cite[Thm~10.4]{EnglHankeNeubauer96}, one can obtain
\begin{theorem}[Convergence rates]
Let \eqref{eq:source} hold and let $L \|w\|_{H^2(\umin,\umax)} < 1$. Then
\begin{align}
\|a^\dag - a_\alpha^\delta\|_{H^2(\umin,\umax)} 
\le C \big( \delta^2/\alpha + \alpha \|w\|^2 + \delta \|w\|_{H^2(\umin,\umax)} +  \|e\|_{L^2(0,T)}^2).
\end{align}
The constant $C$ in the estimate only depends on the size of $L\|w\|_{H^2(\umin,\umax)}$.
\end{theorem}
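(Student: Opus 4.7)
The plan is to follow the standard Engl--Hanke--Neubauer argument for convergence rates of Tikhonov regularization with a source condition (cf.\ \cite[Thm.~10.4]{EnglHankeNeubauer96}), modified in the obvious way to absorb the approximate-source remainder $e$.

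First, I would exploit the minimizing property of $a_\alpha^\delta$, which together with \eqref{eq:noise} yields
$$\|F(a_\alpha^\delta) - h^\delta\|_{L^2(0,T)}^2 + \alpha\|a_\alpha^\delta - a^*\|_{H^2}^2 \le \delta^2 + \alpha\|a^\dag - a^*\|_{H^2}^2,$$
and use the polarization identity $\|a_\alpha^\delta - a^*\|^2 - \|a^\dag - a^*\|^2 = \|a_\alpha^\delta - a^\dag\|^2 + 2\langle a_\alpha^\delta - a^\dag,\, a^\dag - a^*\rangle$ to reduce this to
$$\|F(a_\alpha^\delta) - h^\delta\|_{L^2(0,T)}^2 + \alpha\|a_\alpha^\delta - a^\dag\|_{H^2}^2 \le \delta^2 - 2\alpha\langle a_\alpha^\delta - a^\dag,\, a^\dag - a^*\rangle_{H^2}.$$
Inserting the approximate source condition \eqref{eq:source} and shifting the adjoint then splits the cross term into $-2\alpha\langle F'(a^\dag)(a_\alpha^\delta - a^\dag),\, w\rangle_{L^2(0,T)}$ and $-2\alpha\langle a_\alpha^\delta - a^\dag,\, e\rangle_{H^2}$.

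Second, I would exploit the Lipschitz continuity of $F'$ from Theorem~\ref{thm:differentiability}, which yields the Taylor-type remainder bound
$$\|F(a_\alpha^\delta) - F(a^\dag) - F'(a^\dag)(a_\alpha^\delta - a^\dag)\|_{L^2(0,T)} \le \tfrac{L}{2}\|a_\alpha^\delta - a^\dag\|_{H^2}^2,$$
so that $F'(a^\dag)(a_\alpha^\delta-a^\dag)$ can be replaced by $F(a_\alpha^\delta)-F(a^\dag)$ plus a controlled remainder $R$. Applying Cauchy--Schwarz and Young's inequality together with $\|F(a_\alpha^\delta) - F(a^\dag)\|_{L^2} \le \|F(a_\alpha^\delta) - h^\delta\|_{L^2} + \delta$ then produces
$$\bigl|2\alpha\langle F'(a^\dag)(a_\alpha^\delta - a^\dag),\, w\rangle\bigr| \le \|F(a_\alpha^\delta) - h^\delta\|_{L^2}^2 + \alpha^2\|w\|^2 + 2\alpha\delta\|w\| + \alpha L\|w\|\,\|a_\alpha^\delta - a^\dag\|_{H^2}^2,$$
while a separate Young split tuned to the factor $(1-L\|w\|)/2$ gives $\bigl|2\alpha\langle a_\alpha^\delta-a^\dag, e\rangle_{H^2}\bigr| \le \tfrac{\alpha(1-L\|w\|)}{2}\|a_\alpha^\delta-a^\dag\|_{H^2}^2 + C_w\alpha\|e\|^2$.

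Third, the $\|F(a_\alpha^\delta) - h^\delta\|^2$ contribution is absorbed into the left-hand side, and the two quadratic terms in $\|a_\alpha^\delta - a^\dag\|_{H^2}^2$ are absorbed into $\alpha\|a_\alpha^\delta - a^\dag\|^2$ precisely because $L\|w\| < 1$. Dividing through by $\alpha(1-L\|w\|)/2$ yields the asserted rate. The main obstacle is exactly this final absorption step: the Taylor remainder $\langle R, w\rangle$ is quadratic in the quantity we are trying to bound, so the smallness assumption $L\|w\|_{H^2} < 1$ is indispensable, and the constant $C$ in the final bound inevitably blows up as $L\|w\|\nearrow 1$. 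A secondary subtlety is the proper Hilbert-space interpretation of $F'(a^\dag)^*$: it must be taken as the adjoint with respect to the $H^2(\umin,\umax)$ and $L^2(0,T)$ inner products, which is precisely where our earlier choice of $H^2$ as the parameter space becomes essential for \eqref{eq:source} to make sense.
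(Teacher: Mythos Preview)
Your proposal is correct and follows essentially the same route as the paper's proof: both start from the minimality inequality rewritten via the polarization identity, insert the approximate source condition \eqref{eq:source} to split the cross term, control $F'(a^\dag)(a_\alpha^\delta-a^\dag)$ via the Lipschitz bound on $F'$ from Theorem~\ref{thm:differentiability}, and then absorb terms by Young's inequality using $L\|w\|<1$. The only cosmetic difference is that the paper handles the $e$-term by introducing an auxiliary constant $C'$ chosen so that $L\|w\|+\tfrac{1}{C'}<1$, whereas you tune Young's inequality directly to the factor $(1-L\|w\|)/2$; these are equivalent.
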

\begin{proof}
Proceeding as in \cite{EnglHankeNeubauer96,EnglKunischNeubauer89}, one can see that 
\begin{align*}
\|F(a_\alpha^\delta) - h^\delta\|^2 + \alpha \|a_\alpha^\delta - a^\dag\|^2 
\le \delta^2 + 2\alpha (a^\dag-a_\alpha^\delta, a^\dag - a^* ). 
\end{align*}
Using the approximate source condition \eqref{eq:source}, the last term can be estimated by 
\begin{align*}
(a^\dag - a_\alpha^\delta, a^\dag - a^* ) 
&= (F'(a^\dag) (a^\dag -a_\alpha^\delta) , w) + (a^\dag-a_\alpha^\delta,e) \\
&\le \|F'(a^\dag) (a^\dag-a_\alpha^\delta)\| \|w\| + \|a^\dag-a_\alpha^\delta\|\|e\|.
\end{align*}
By elementary manipulations and the Lipschitz continuity of the derivative, one obtains
\begin{align*}
\|F'(a^\dag) (a^\dag - a_\alpha^\delta)\| 
&\le \|F(a_\alpha^\delta)-h^\delta\| + \delta + \tfrac{L}{2} \|a_\alpha^\delta - a^\dag\|^2.
\end{align*}
Using this in the previous estimates and applying Young inequalities leads to 
\begin{align*}
&\|F(a_\alpha^\delta) - h^\delta\|^2 + \alpha \|a_\alpha^\delta - a^\dag\|^2 \\
&\le \delta^2 + 2 \alpha^2 \|w\|^2 + 2 \alpha \delta \|w\| + C' \alpha \|e\|^2 
  + \frac{1}{2} \|F(a_\alpha^\delta)-h^\delta\|^2 + \alpha \|a_\alpha^\delta-a^\dag\|^2 (L \|w\| + \tfrac{1}{C'}).
\end{align*}
If $L\|w\|<1$, we can choose $C'$ sufficienlty large such that $L \|w\| + \tfrac{1}{C'} < 1$ and the last two terms can be absorbed in the left hand side, which yields the assertion.
\end{proof}

\begin{remark}
The bound of the previous theorem yields a quantitative estimate for the error.
If the source condition \eqref{eq:source} holds with $e=0$ and $L \|w\| < 1$,  
then for $\alpha \approx \delta$ one obtains $\|a_\alpha^\delta - a^\dag\| = O(\delta^{1/2})$,
which is the usual convergence rate result \cite[Thm.~10.4]{EnglHankeNeubauer96}.
The theorem however also yields estimates and a guidline for the choice of the regularization parameter in the general case. 
We refer to \cite{HeinHofmann05} for an extensive discussion of the approximate source condition \eqref{eq:source} and its relation to more standard conditions.
\end{remark}

\begin{remark}
If the deviation from the classical source condition is small, i.e., if \eqref{eq:source} holds with 
$\|e\| \approx \delta^{1/2}$ and $L \|w\| < 1$, 
then for $\alpha \approx \delta$ one still obtains the usual estimate $\|a_\alpha^\delta - a^\dag\| = O(\delta^{1/2})$. 
As we will illustrate in the next section, the assumption that $\|e\|$ is small is realistic in practice, 
if the experimental setup is chosen appropriately. 
The assumption that $\|e\|$ is sufficiently small in comparison to $\alpha$ also allows to show 
that the Tikhonov functional is locally convex around minimizers and to prove convergence of iterative schemes;
see \cite{EggerSchlottbom15,ItoJin15} for some recent results in this direction.
\end{remark}

Numerical methods for minimizing the Tikhonov functional usually require the application of the adjoint derivative operator. 
For later reference, let us therefore briefly give a concrete representation of the adjoint that can be used for the implementation.

\begin{lemma}
Let $\psi \in H^1(0,T)$ with $\psi(T)=0$ and let $(q,v)$ denote the solution of 
\begin{align}
\dt q + \dx v &= 0 \qquad \qquad x \in (0,1), \ t<T,  \label{eq:adj1}\\
\dt v + \dx q &= a'(u) v,  \quad \ x \in (0,1), \ t<T, \label{eq:adj2}
\end{align}
with terminal conditions $v(x,T)=q(x,T)=0$ and boundary conditions 
\begin{align}
v(0,t)=v(1,t)=\psi(t), \quad t<T.  \label{eq:adj3}
\end{align}
Then the action of the adjoint operator $\phi=F'(a)^* \psi$ is given by 
\begin{align} \label{eq:adj}
(\phi,b)_{H^2(\umin,\umax)} = \int_0^T (b(u), v)_{L^2(0,1)} dt , \qquad \forall b \in H^2(\umin,\umax).
\end{align}
\end{lemma}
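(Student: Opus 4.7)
The plan is to obtain the identity \eqref{eq:adj} by testing the sensitivity system \eqref{eq:sen1}--\eqref{eq:sen34} against the adjoint variables $(q,v)$ and integrating by parts in both space and time. First, I would unwind the definition of the adjoint, writing
\begin{align*}
(F'(a)^*\psi,b)_{H^2(\umin,\umax)}
= (\psi, F'(a) b)_{L^2(0,T)}
= \int_0^T \psi(t)\bigl(r(0,t) - r(1,t)\bigr)\,dt,
\end{align*}
where $(r,w)$ is the classical solution of the sensitivity system for direction $b$ from the proof of Theorem~\ref{thm:differentiability}. The task then reduces to showing that this quantity equals $\int_0^T (b(u), v)_{L^2(0,1)}\,dt$.

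The main computation is standard: multiply \eqref{eq:sen1} by $q$ and \eqref{eq:sen2} by $v$, integrate over $(0,1)\times(0,T)$, and integrate by parts. The temporal boundary contributions vanish because $r(\cdot,0)=w(\cdot,0)=0$ (initial conditions of the sensitivity system) and $q(\cdot,T)=v(\cdot,T)=0$ (terminal conditions of the adjoint system). For the spatial integration by parts, the term $\int_0^T\int_0^1 q\,\dx w\,dx\,dt$ produces no boundary contribution since $w(0,t)=w(1,t)=0$, while $\int_0^T\int_0^1 v\,\dx r\,dx\,dt$ yields the boundary term
\begin{align*}
\int_0^T \bigl(v(1,t)r(1,t) - v(0,t)r(0,t)\bigr)\,dt = -\int_0^T \psi(t)\bigl(r(0,t)-r(1,t)\bigr)\,dt,
\end{align*}
using the boundary condition \eqref{eq:adj3}. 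Collecting all terms, the volume integrals against $r$ and $w$ carry precisely the coefficients $-\dt q - \dx v$ and $-\dt v - \dx q + a'(u)v$, which vanish by the adjoint equations \eqref{eq:adj1}--\eqref{eq:adj2}. What remains is exactly
\begin{align*}
\int_0^T \psi(t)\bigl(r(0,t)-r(1,t)\bigr)\,dt = \int_0^T\int_0^1 b(u(x,t))\,v(x,t)\,dx\,dt,
\end{align*}
which is \eqref{eq:adj}.

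The main technical obstacle is justifying these manipulations rigorously, which requires enough regularity of the adjoint state $(q,v)$ for the space-time integrations by parts to make sense. The adjoint system \eqref{eq:adj1}--\eqref{eq:adj3} is a linear damped wave equation driven by the inhomogeneous boundary datum $\psi$; after the time reversal $\tau = T-t$ it fits exactly into the framework of Theorem~\ref{thm:classical}, provided the boundary lift constructed from $\psi$ has the required smoothness. The hypothesis $\psi\in H^1(0,T)$ with $\psi(T)=0$ supplies the compatibility needed to build such a lift and obtain $(q,v)\in C([0,T];H^1\times H^1)\cap C^1([0,T];L^2\times L^2)$, which is sufficient for all boundary and interior terms above to be well defined. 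If one wishes to avoid verifying classical-solution regularity for every $\psi$, the identity can alternatively be established first for $\psi\in C_c^\infty(0,T)$ (where all manipulations are unambiguous) and then extended to $\psi\in H^1(0,T)$ with $\psi(T)=0$ by density and continuous dependence of $(q,v)$ on the boundary data.
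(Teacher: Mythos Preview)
Your proposal is correct and follows essentially the same approach as the paper: both arguments compute $(F'(a)b,\psi)_{L^2(0,T)}$ by integrating by parts in space and time against the sensitivity and adjoint systems, using the homogeneous initial/terminal and boundary conditions to eliminate all terms except the boundary contribution $\int_0^T \psi\,(r(0,\cdot)-r(1,\cdot))\,dt$ and the volume term $\int_0^T (b(u),v)\,dt$. The only cosmetic difference is that the paper starts from the boundary term $\int_0^T r(0,t)v(0,t)-r(1,t)v(1,t)\,dt$ and unwinds it via $-(\dx r,v)-(r,\dx v)$, whereas you test the sensitivity equations with $(q,v)$ and collect; the underlying manipulations and cancellations are identical, and your remark on regularity and the density argument for $\psi$ matches the paper's subsequent comment.
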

\begin{proof}
By definition of the adjoint operator, we have 
\begin{align*}
(b,F'(a)^* \psi)_{H^2(\umin,\umax)} = (F'(a) b, \psi)_{L^2(0,T)}. 
\end{align*}
Using the characterization of the derivative via the solution $(r,w)$ of the sensitivity equation \eqref{eq:sen1}--\eqref{eq:sen2} and the definition of the adjoint state $(q,v)$ via \eqref{eq:adj1}--\eqref{eq:adj2}, we obtain 
\begin{align*}
&(F'(a) b, \psi)_{L^2(0,T)}
 = \int\nolimits_0^T r(0,t) v(0,t) - r(1,t) v(1,t) dt \\
&= \int\nolimits_0^T -(\dx r,v) - (r, \dx v) dt 
=\int\nolimits_0^T (\dt w+a'(u) w + b(u), v) + (r,\dt q) dt \\
&= \int\nolimits_0^T -(\dt v - a'(u) v,w) + (b(u),v) + (\dx w,q) dt 
=\int\nolimits_0^T (b(u), v) dt.
\end{align*}
For the individual steps we only used integration-by-parts and made use of the boundary and initial conditions.
This already yields the assertion.
\end{proof}


\begin{remark}
Existence of a unique solution $(q,v)$ of the adjoint system \eqref{eq:adj1}--\eqref{eq:adj2} 
with the homogeneous terminal condition $v(x,T)=q(x,T)=0$ and boundary condition $q(0,t)=q(1,t)=\psi(t)$ follows 
with the same arguments as used in Theorem~\ref{thm:classical}. 
The presentation of the adjoint formally holds also for $\psi \in L^2(0,T)$, which can be proved by a limiting process.
The adjoint problem then has to be understood in a generalized sense.
\end{remark}

%
%
%
%
%

\section{Remarks about uniqueness and the approximate source condition} \label{sec:hyp}

We now collect some comments about the uniqueness hypothesis \eqref{eq:unique} and the approximate source condition \eqref{eq:source}. Our considerations are based on the fact that the nonlinear inverse problem is actually close to a linear inverse problem provided that the experimental setup is chosen appropriately. 
We will only sketch the main arguments here with the aim to illustrate the plausibility of these assumptions and to 
explain what results can be expected in the numerical experiments presented later on.

\subsection{Reconstruction for a stationary experiment}
Let the boundary data \eqref{eq:sys3} be chosen such that 
$g_0(t)=g_1(t)=\bar g \in \RR$ for $t \ge t_0$. 
By the energy estimates of \cite{GattiPata06}, which are derived for an equivalent problem in second order form \eqref{eq:second} there, one can show that the solution 
$(p(t),u(t))$ of the system \eqref{eq:sys1}--\eqref{eq:sys4} converges exponentially fast to a steady state 
$(\bar p,\bar u)$, which is the unique solution of 
\begin{align}
\dx \bar u &= 0, \quad x \in (0,1), \label{eq:stat1}\\
\dx \bar p + a(\bar u) &= 0, \quad x \in (0,1), \label{eq:stat2} 
\end{align}
with boundary condition $\bar u(0)=\bar u(1)=\bar g$.
From equation \eqref{eq:stat1}, we deduce that the steady state $\bar u$ is constant, 
and upon integration of \eqref{eq:stat2}, we obtain 
\begin{align} \label{eq:statsol}
a(\bar g) = a(\bar u) = \int_0^1 a(\bar u) dx = -\int_0^1 \dx \bar p(x) dx = \bar p(0)-\bar p(1) = \tpb.
\end{align}
The value $a(\bar g)$ can thus be determined by a stationary experiment. 
As a consequence, the friction law $a(\cdot)$ could in principle be determined from an infinite number of 
stationary experiments. 
We will next investigate the inverse problem for these \emph{stationary experiments} in detail.
In a second step, we then use these results for the analysis of the inverse problem for the instationary experiments 
that are our main focus.

\subsection{A linear inverse problem for a series of stationary experiments}

Let us fix a smooth and monotonic function $g : [0,T] \to [\umin,\umax]$ and denote by $\tpb(t)=a(g(t))$ the pressure difference obtained from the stationary system \eqref{eq:stat1}--\eqref{eq:stat2} with boundary flux $\bar g=g(t)$.
The forward operator for a sequence of stationary experiments is then given by
\begin{align} \label{eq:statop}
K : H^2(\umin,\umax) \to L^2(0,T), \qquad a \mapsto a(g(\cdot)),
\end{align}
and the corresponding inverse problem with exact data reads
\begin{align} \label{eq:statinv}
(K a)(t) = a(g(t)) = \tpb(t), \qquad t \in [0,T].
\end{align}
This problem is linear and its solution is given by the simple formula \eqref{eq:statsol} 
with $\bar g$ and $\tpb$ replaced by $g(t)$ and $\tpb(t)$ accordingly. 
%
From this representation, it follows that 
\begin{align*}
\|a - \tilde a\|^2_{L^2(\umin,\umax)} 
&= \int_0^T |a(g(t)) - \tilde a(g(t))|^2 |g'(t)| dt 
 \le C \|K a - K \tilde a\|_{L^2(0,T)}^2,
\end{align*}
where we assumed that $|g'(t)| \le C$ for all $t$.
Using the uniform bounds iny assumption (A1), 
embedding, and interpolation, one can further deduce that 
\begin{align} \label{eq:stathoelder}
\|a - \tilde a\|_{H^2(\umin,\umax)} \le C_\gamma \|K a-K\tilde a\|_{L^2(0,T)}^\gamma.
\end{align}
This shows the Hölder stability of the inverse problem \eqref{eq:statinv} for stationary experiments. 
As a next step, we will now extend these results to the instationary case by a perturbation argument 
as proposed in \cite{EggerPietschmannSchlottbom15} for a related inverse heat conduction problem.

\subsection{Approximate stability for the instationary inverse problem}

If the variation of the boundary data $g(t)$ with respect to time is sufficiently small, 
then from the exponential stability estimates of \cite{GattiPata06}, one may deduce that 
\begin{align} \label{eq:eps}
 \|p(t)-\bar p(t)\|_{H^1} + \|u(t)-\bar u(t)\|_{H^1} \le \eps.
\end{align}
Hence the solution $(p(t),u(t))$ is always close to the stationary state $(\bar p(t),\bar u(t))$ 
with the corresponding boundary data $\bar u(0,t)=\bar u(1,t)=g(t)$. 
Using $\triangle p(t) = p(0,t) - p(1,t) = -\int_0^1 \dx p(y,t) dy$ and the Cauchy-Schwarz inequality leads to
\begin{align} \label{eq:est}
|\tp(t) - a(g(t))| = |\tp(t) - \tpb(t)| \le \|p(t)-\bar p(t)\|_{H^1(0,1)} \le \eps. 
\end{align}
From the definition of the nonlinear and the linear forward operators, we deduce that
\begin{align} \label{eq:estOps}
F(a) = K a + O(\eps).
\end{align}
\begin{remark}
As indicated above, the error $\eps$ can be made arbitrarily small by a proper design of the experiment, i.e., by 
slow variation of the boundary data $g(t)$.
The term $O(\eps)$ can therefore be considered as an additional measurement error,
and thus the parameter $a$ can be determined approximately with the formula \eqref{eq:statsol} 
for the stationary experiments.
As a consequence of the stability of the linear inverse problem, we further obtain 
\begin{align} \label{eq:hoelder}
\|a-\tilde a\|_{H^2(\umin,\umax)} \le C'_\gamma \|F(a)-F(\tilde a)\|_{L^2(0,T)}^\gamma + C''_\gamma \eps^\gamma. 
\end{align}
In summary, we may thus expect that the identification from the nonlinear experiments is stable and unique, 
provided that the experimental setup is chosen appropriately. 
\end{remark}

\subsection{The approximate source condition}

With the aid of the stability results in \cite{GattiPata06} and similar reasoning as above, 
one can show that the linearized operator satisfies
\begin{align*}
F'(a) h = K h + O(\eps \|h\|_{H^2(\umin,\umax)}).
\end{align*}
A similar expansion is then also valid for the adjoint operator, namely
\begin{align*}
F'(a)^* w  = K^*w + O(\eps \|w\|_{L^2(0,T)}).
\end{align*}
This follows since $L=F'(a)-K$ is linear and bounded by a multiple of $\eps$, 
and so is the adjoint $L^*=F'(a)^*-K^*$.
In order to verify the approximate source condition \eqref{eq:source}, it thus suffices to 
consider the condition $z = K^* w$ for the linear problem. 
From the explicit respresentation \eqref{eq:statop} of the operator $K$ this can be translated directly to a smoothness condition on $z$  in terms of weighted Sobolev spaces and some boundary conditions; we refer to \cite{EggerPietschmannSchlottbom14} for a detailed derivation in a similar context. 

\begin{remark}
The observations made in this section can be summarized as follows:

(i) If the true parameter $a$ is sufficiently smooth, and if the boundary data are varied sufficiently slowly ($\eps$ small), 
such that the instationary solution at time $t$ is close to the steady state corresponding to the boundary data $g(t)$,
then the parameter can be identified stably with the simple formula for the linear inverse problem. 
The same stable reconstructions will also be obtained with Tikhonov regularization \eqref{eq:tikhonov}. 

(ii) For increasing $\eps$, the approximation \eqref{eq:estOps} of the nonlinear problem by the linear problem deteriorates. 
In this case, the reconstruction by the simple formula \eqref{eq:statsol} will get worse while the solutions obtained 
by Tikhonov regularization for the instationary problem can be expected to still yield good and stable reconstructions.
\end{remark}

\begin{remark}
Our reasoning here was based on the \emph{approximate stability estimate} \eqref{eq:hoelder} that is inherited from the 
satationary problem by a perturbation argument. 
A related analysis of Tikhonov regularization under \emph{exact} conditional stability assumptions
can be found \cite{ChengYamamoto00,HofmannYamamoto10} together with some applications.
\end{remark}

\section{Numerical tests} \label{sec:num}

For illustration of our theoretical considerations discussed in the previous section, 
let us we present some numerical results which provide additional evidence for the 
uniqueness and stability of the inverse problem.

\subsection{Discretization of the state equations}

For the space discretization of state system \eqref{eq:sys1}--\eqref{eq:sys4}, 
we utilize a mixed finite element method based on a weak formulation of the problem. 
The pressure $p$ and the velocity $u$ are approximated with continuous piecewise linear 
and discontinuous piecewise constant finite elements, respectively. For the time discretization, we employ a one step scheme in which the differential terms are treated implicitly and the nonlinear damping term is integrated explicitly. 
A single time step of the resulting numerical scheme then has the form 
\begin{align*}
\tfrac{1}{\tau} (p^{n+1}_h,q_h) - (u_h^{n+1},\dx q_h) &= \tfrac{1}{\tau} (p_h^n,q_h) + g_0^{n+1} q_h(0) - g_1^{n+1} q_h(1), \\ 
\tfrac{1}{\tau} (u_h^{n+1},v_h) + (\dx p_h^{n+1},v_h) &= \tfrac{1}{\tau} (u_h^n,v_h) - (a(u_h^n),v_h), 
\end{align*}
for all test functions $q_h \in P_1(T_h) \cap C[0,1]$ and $v_h \in P_0(T_h)$.
Here $T_h$ is the mesh of the interval $(0,1)$, $P_k(T_h)$ denotes the space of piecewise polynomials on $T_h$, $\tau>0$ is the time-step, and $g_i^n=g_i(t^n)$ are the boundary fluxes at time $t^{n}=n \tau$. 
The functions $(p_h^n,u_h^n)$ serve as approximations for the solutions $(p_h(t^n),u_h(t^n))$
at the discrete time steps.
Similar schemes are used to approximate the sensitivity system \eqref{eq:sen1}--\eqref{eq:sen34} and the adjoint problem \eqref{eq:adj}--\eqref{eq:adj3} in a consistent manner.
The spatial and temporal mesh size were chosen so small such that 
approximation errors due to the discretization can be neglected;
this was verified by repeating the tests with different discretization parameters.

\subsection{Approximation of the parameter}

The parameter function $a(\cdot)$ was approximated by cubic interpolating splines over a uniform grid of the interval $[\umin,\umax]$. 
The splines were parametrized by the interpolation conditions $s(u_i)=s_i$, $i=0,\ldots,m$ and knot-a-knot conditions was used to obtain a unique representation. To simplify the implementation, the $L^2$, $H^1$, and $H^2$ norm in the parameter space were approximated by difference operators acting directly on the interpolation points $s_i$, $i=0,\ldots,m$. 
To ensure mesh independence, the tests were repeated for different 
numbers $m$ of interpolation points.

\subsection{Minimization of the Tikhonov functional}

For minimization of the Tikhonov functional \eqref{eq:tikhonov}, we utilized a projected iteratively regularized Gau\ss-Newton method with regularization parameters $\alpha^n = c q^n$, $q<1$. 
The bounds in assumption (A1) for the parameters were satisfied automatically for all iterates in our tests such that  the projection step was never carried out. 
The iteration was stopped by a discrepancy principle, i.e., when $\|F(a^n) - h^\delta\| \le 1.5 \delta$ was valid the first time. 
The regularization parameter $\alpha^n$ of the last step was interpreted as the regularization parameter $\alpha$ of the Tikhonov functional \eqref{eq:tikhonov}.
We refer to \cite{EggerSchlottbom11} for details concerning such a strategy for the iterative minimization of the Tikhonov functional.
The discretizations of the derivative and adjoint operators $F'(a)$ and $F'(a)^*$ were implemented consistently, such that $(F'(a) h, \psi) = (h, F'(a)^* \psi)$ holds exactly also on the discrete level. 
The linear systems of the Gau\ss-Newton method were then solved by a preconditioned conjugate gradient algorithm. 

\subsection{Setup of the test problem}

As true damping parameter, we used the function
\begin{align} \label{eq:adag}
a^\dag(u) = u \sqrt{1+u^2} . 
\end{align}
The asymptotic behaviour here is 
$a(u) \approx u$ for $|u| \ll 1$ and $a(u) \approx u |u|$ for $|u| \gg 1$, 
which corresponds to the expected behaviour of the friction forces in pipes \cite{LandauLifshitz6}.
Restricted to any bounded interval $[\umin,\umax]$, the function $a^\dag$ satisfies the assumptions (A1).

For our numerical tests, we used the initial data $u_0 \equiv 0$, $p_0 \equiv 1$, 
and we chose 
\begin{align} \label{eq:g}
g_0(t)=g_1(t)=g(t)=2 \sin(\tfrac{\pi}{2T} t)^2
\end{align}
as boundary fluxes. 
A variation of the time horizon $T$ thus allows us to tune the speed of variation in the boundary data, 
while keeping the interval $[\umin,\umax]$ of fluxes that arise at the boundary fixed. 

\subsection{Simulation of measurement data}
The boundary data $g(t;T)$ and the resulting pressure drops $\tp(t;T)$ across the pipe resulting are displayed 
in Figure~\ref{fig:1} for different choices of $T$. 
For comparison, we also display the pressure drop $\tpb$ obtained with the 
linear forward model.

\begin{figure}[ht!]
\medskip
\includegraphics[height=3.2cm]{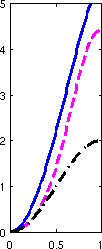} \hspace*{0.75cm}
\includegraphics[height=3.2cm]{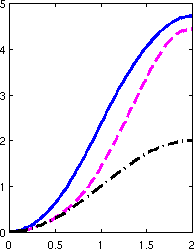} \hspace*{0.75cm}
\includegraphics[height=3.2cm]{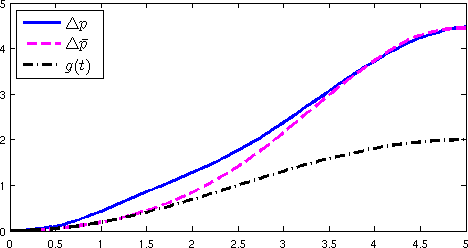} \\[0.5cm]
\includegraphics[height=3.2cm]{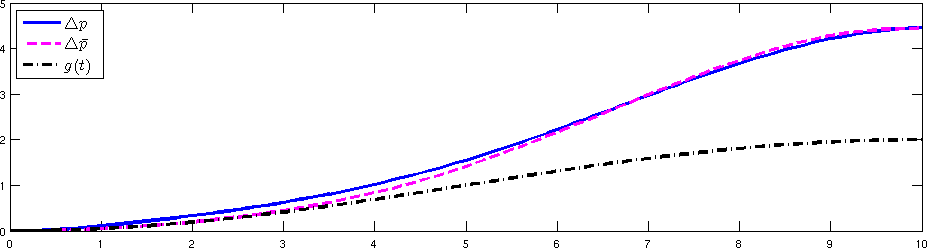}
\caption{Boundary flux $g(t)$ and pressure drops $\tp(t)$ and $\tpb(t)$ for the instationary and the linearized model for time horizon $T=1,2,5,10$. \label{fig:1}}
\end{figure}

The following observations can be made: 
For small values of $T$, the pressure drop $\tp$ varies rapidly all over the time interval $[0,T]$ and 
therefore deviates strongly from the pressure drop $\tpb$ of the linearized model corresponding to stationary 
experiments.
In contrast, the pressure drop $\tp$ is close to that of the linearized model on the whole time interval $[0,T]$, when $T$ is large and therefore the variation in the boundary data $g(t)$ is small. 
As expected from \eqref{eq:estOps}, the difference between $\tp$ and $\tpb$ becomes smaller when $T$ is increased.
A proper choice of the parameter $T$ thus allows us to tune our experimental setup 
and to verify the conclusions obtained in Section~\ref{sec:hyp}.

\subsection{Convergence to steady state}

We next demonstrate in more detail that the solution 
$(p(t),u(t))$ of the instationary problem is close to the steady states $(\bar p(t),\bar u(t))$ for 
boundary data $\bar u(0)=\bar u(1)=g(t)$, provided that $g(t)$ varies sufficiently slowly; cf \eqref{eq:eps}.
In Table~\ref{tab:1}, we list the errors 
\begin{align*}
e(T) := \max_{0 \le t \le T} \|p(t;T)-\bar p(t;T)\|_{L^2} + \|u(t;T)-\bar u(t;T)\|_{L^2} 
\end{align*}
between the instationary and the corresponding stationary states 
for different values of $T$ in the definition of the boundary data $g(t;T)$.
In addition, we also display the difference 
\begin{align*}
d(T)=\max_{0 \le t \le T} |\tp(t) - \tpb(t)| 
\end{align*}
in the measurements corresponding to the nonlinear and the linearized model.
\begin{table}[ht!]
\centering
\small
\begin{tabular}{c||c|c|c|c|c|c}
 $T$    & $1$     & $2$     & $5$     & $10$    & $20$    & $50$ \\
\hline
 $e(T)$ & $1.016$ & $0.647$ & $0.207$ & $0.105$ & $0.054$ & $0.022$ \\
\hline
 $d(T)$ & $1.044$ & $1.030$ & $0.479$ & $0.225$ & $0.114$ & $0.045$
\end{tabular}
\medskip

\caption{Error $e(T)$ between instationary and stationary solution and difference $d(T)$ in the corresponding 
measurements.\label{tab:1}} 
\end{table}
The speed of variation in the boundary data decreases when $T$ becomes larger, 
and we thus expect a monotonic decrease of the distance $e(T)$ to steady state 
with increasing time horizon. The same can be expected for the error $d(T)$
in the measurements. This is exactly the behaviour that we observe in our numerical tests. 

\subsection{Reconstructions for nonlinear and linearized model}

Let us now turn to the inverse problem
and compare the reconstructions for the nonlinear inverse problem obtained by Tikhonov regularization with that computed by the simple formula \eqref{eq:statsol} for the linearized inverse problem corresponding to stationary experiments. 
The data for these tests are generated by simulation as explained before, and then perturbed with random noise such that $\delta=0.001$.
Since the noise level is rather small, the data perturbations do not have any visual effect on the reconstructions here;
see also Figure~\ref{fig:3} below.

In Figure~\ref{fig:2}, we display the corresponding results for measurements $h=\tp(\cdot;T)$ obtained for different time horizons $T$ in the definition of the boundary data $g(\cdot;T)$. 

\begin{figure}[ht!]
\includegraphics[height=5cm]{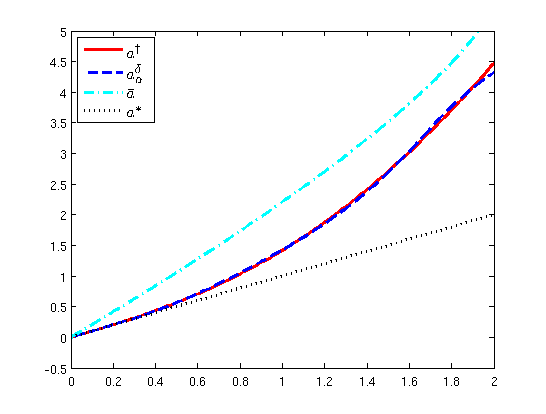} 
\includegraphics[height=5cm]{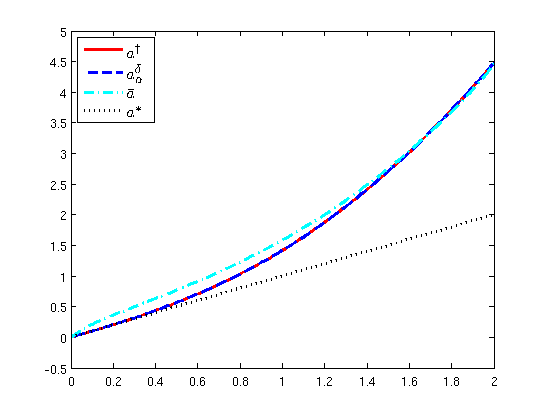} \\
\includegraphics[height=5cm]{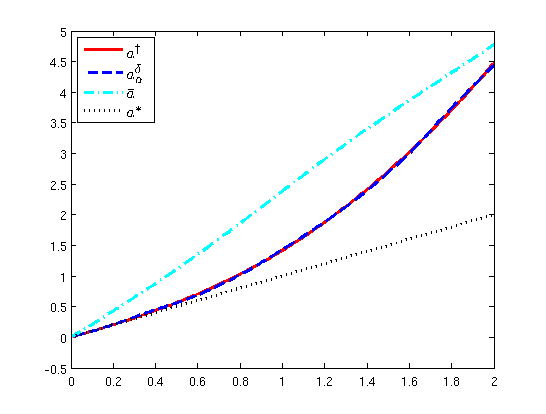} 
\includegraphics[height=5cm]{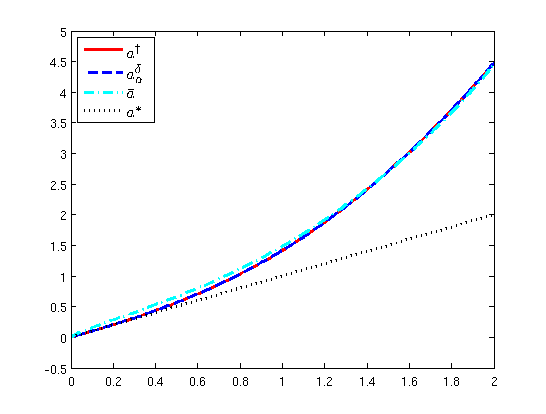} \\
\includegraphics[height=5cm]{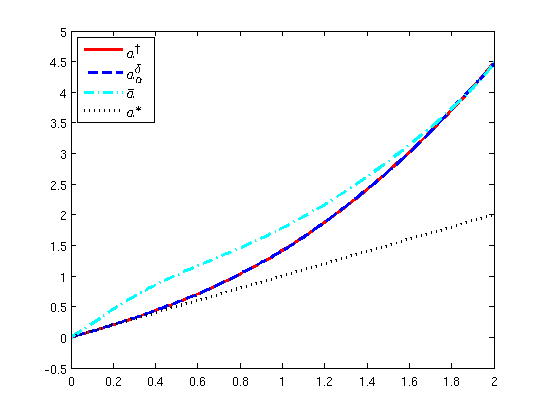} 
\includegraphics[height=5cm]{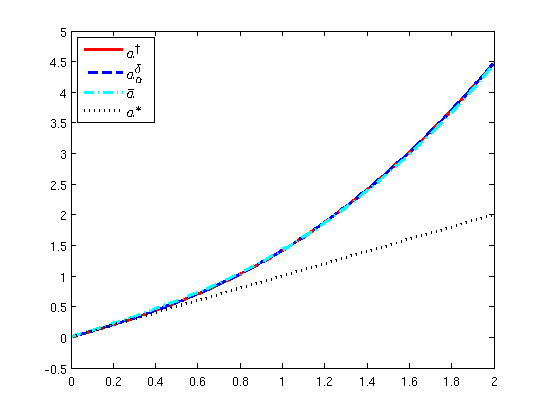}
\caption{True parameter $a^\dag$, reconstruction $a_\alpha^\delta$ obtained by Tikhonov regularization with initial guess $a^*$, and result $\bar a$ obtained by formula \eqref{eq:statsol}. 
The data $h^\delta$ are perturbed by random noise of size $\delta=0.001$.
The images correspond to time horizons $T=1,2,5$ (left) and $T=10,20,50$ (right).\label{fig:2}} 
\end{figure}

As can be seen from the plots, the reconstruction with Tikhonov regularization works well in all test cases. 
The results obtained with the simple formula \eqref{eq:statsol} however show some systematic deviations due to model errors, 
which however become smaller when increasing $T$. 
Recall that for large $T$, the speed of variation in the boundary fluxes $g(t;T)$ is small, 
so that the system is close to steady state on the whole interval $[0,T]$. 
The convergence of the  reconstruction $\bar a$ for the linearized problem towards the true solution $a^\dag$
with increasing $T$ is thus in perfect agreement with our considerations in Sections~\ref{sec:hyp}.

\subsection{Convergence and convergence rates}

In a last sequence of tests, we investigate the stability and accuracy of the 
reconstructions $a_\alpha^\delta$ obtained with Tikhonov regularization in the presence of data noise. 
Approximations for the minimizers $a_\alpha^\delta$ are computed numerically via the projected 
iteratively regularized Gau\ss-Newton method as outlined above. 
The iteration is stopped according to the discrepancy principle.
Table~\ref{tab:2} displays the reconstruction errors for different time horizons $T$ and different noise levels $\delta$. 

\begin{table}[ht!]
\centering
\small
\begin{tabular}{c||c|c|c|c|c|c}
$\delta \backslash T$   
          & $1$      & $2$      & $5$      & $10$     & $20$     & $50$ \\
\hline
\hline
$0.10000$ & $0.8504$ & $0.3712$ & $0.1027$ & $0.0417$ & $0.0324$ & $0.0092$ \\
\hline
$0.05000$ & $0.6243$ & $0.2742$ & $0.0706$ & $0.0239$ & $0.0081$ & $0.0055$ \\
\hline
$0.02500$ & $0.3911$ & $0.1616$ & $0.0496$ & $0.0096$ & $0.0066$ & $0.0032$ \\
\hline
$0.01250$ & $0.2264$ & $0.1050$ & $0.0355$ & $0.0065$ & $0.0024$ & $0.0019$ \\
\hline
$0.00625$ & $0.1505$ & $0.0630$ & $0.0316$ & $0.0030$ & $0.0015$ & $0.0012$ 
\end{tabular}
\medskip

\caption{Reconstruction error $\|a_\alpha^\delta - a^\dag\|_{L^2(\umin,\umax)}$ for Tikhonov regularization for different noise levels $\delta$ and various time horizons $T$. \label{tab:2}}
\end{table}

Convergence is observed for all experimental setups, but the absolut errors decrease 
monotonically with increasing time horizon $T$, which is partly explained by 
our considerations in Section~\ref{sec:hyp}.
The reconstructions for time horizon $T=2$, corresponding to the third column of Table~\ref{tab:2},
are depicted in Figure~\ref{fig:3}; also compare with Figure~\ref{fig:2}. 

\begin{figure}[ht!]
\includegraphics[height=5cm]{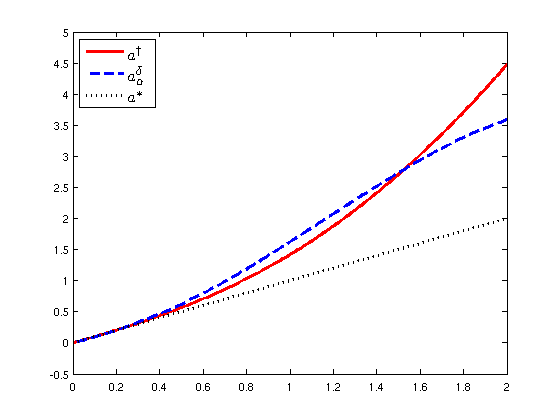} 
\includegraphics[height=5cm]{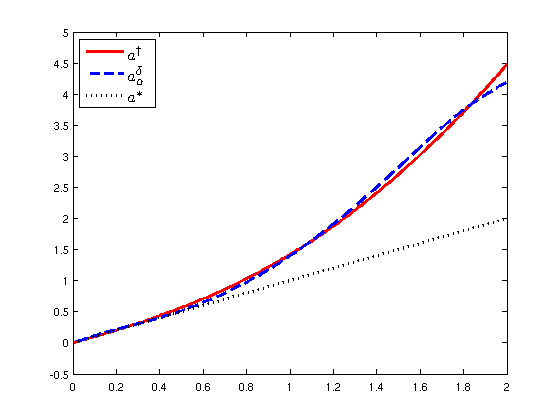} \\
\includegraphics[height=5cm]{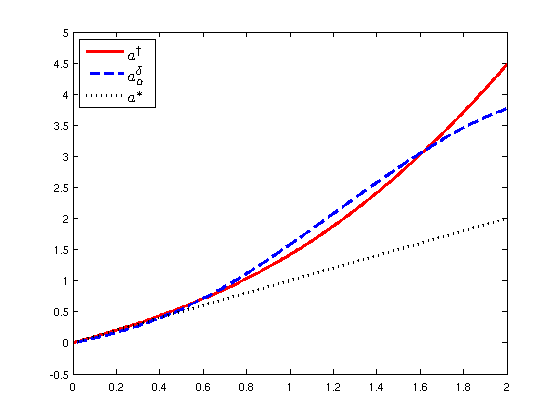} 
\includegraphics[height=5cm]{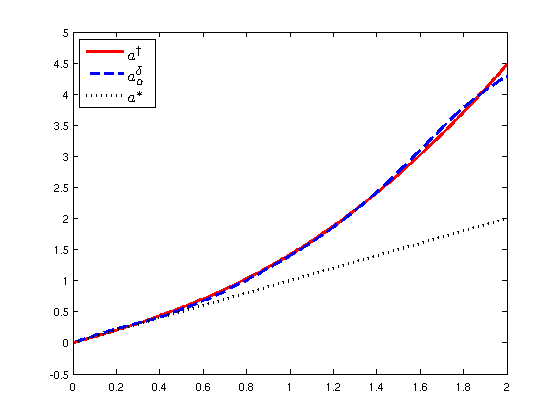} \\
\includegraphics[height=5cm]{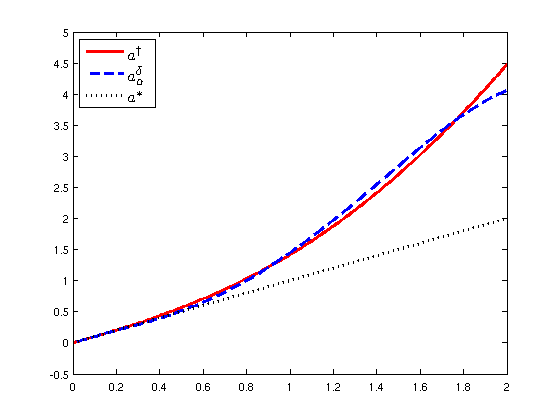} 
\includegraphics[height=5cm]{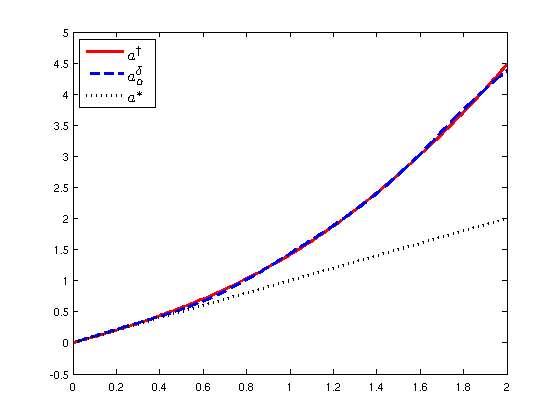}
\caption{True parameter $a^\dag$, reconstruction $a_\alpha^\delta$ obtained by Tikhonov regularization, and initial guess $a^*$ for time horizon $T=2$ and noise levels $\delta=0.1,0.05,0.025$ (left) and $\delta=0.0125,0.00625,0.003125$ (right).\label{fig:3}} 
\end{figure}

Note that already for a small time horizon $T=2$ and large noise level $\delta$ of several percent, one can obtain good reconstructions of the damping profile. For larger time horizon or smaller noise levels, the reconstruction $a_\alpha^\delta$ visually coincides completely with the true solution $a^\dag$.
This is in good agreement with our considerations in Section~\ref{sec:hyp}.

\section{Discussion} \label{sec:sum}

In this paper, we investigated the identification of a nonlinear damping law in a 
semilinear hyperbolic system from additional boundary measurements. Uniqueness and 
stability of the reconstructions obtained by Tikhonov regularization was observed
in all numerical tests. This behaviour could be explained theoretically by considering 
the nonlinear inverse problem as a perturbation of a nearby linear problem, for which 
uniqueness and stability can be proven rigorously.

In the coefficient inverse problem under investigation, the distance to the approximating 
linearization could be chosen freely by a proper experimental setup. A similar argument was
already used in \cite{EggerPietschmannSchlottbom15} for the identification of a nonlinear 
diffusion coefficient in a quasi-linear heat equation. 
The general strategy might however be useful in a more general context and for many other applications.

Based on the uniqueness and stability of the linearized inverse problem, we could obtain 
stability results for the nonlinear problem \emph{up to perturbations}; see 
Section~\ref{sec:hyp} for details. Such a concept might be useful as well for the convergence 
analysis of other regularization methods and more general inverse problems.

In all numerical tests we observed global convergence of an iterative method for the minimization 
of the Tikhonov functional. Since the minimizer is unique for the linearized problem, such
a behaviour seems not too surprising. At the moment, we can however not give a rigorous 
explanation of that fact. Let us note however, that H\"older stability of the inverse 
problem can be employed to prove convergence and convergence rates for Tikhonov regularization \cite{ChengHofmannLu14,ChengYamamoto00} and also global convergence of iterative regularization methods 
\cite{deHoopQiuScherzer12} without further assumptions. 
An extension of these results to inverse problems satisfying \emph{approximated stability conditions}, 
as the one considered here, might be possible.

\section*{Acknowledgements}
The authors are grateful for financial support by the German Research Foundation (DFG) via grants IRTG~1529, GSC~233, and TRR~154.


\begin{thebibliography}{10}

\bibitem{Baudouin13}
L.~Baudouin, M.~De~Buhan, and S.~Ervedoza.
\newblock Global {C}arleman estimates for waves and applications.
\newblock {\em Comm. Partial Differential Equations}, 38:823--859, 2013.

\bibitem{BrouwerGasserHerty11}
J.~Brouwer, I.~Gasser, and M.~Herty.
\newblock Gas pipeline models revisited: Model hierarchies, non-isothermal
  models and simulations of networks.
\newblock {\em Multiscale Model. Simul.}, 9:601--623, 2011.

\bibitem{Bukgheim01}
A.~L. Bukhgeim, J.~Cheng, V.~Isakov, and M.~Yamamoto.
\newblock Uniqueness in determining damping coefficients in hyperbolic
  equations.
\newblock In {\em Analytic extension formulas and their applications
  ({F}ukuoka, 1999/{K}yoto, 2000)}, volume~9 of {\em Int. Soc. Anal. Appl.
  Comput.}, pages 27--46. Kluwer Acad. Publ., Dordrecht, 2001.

\bibitem{CannonDuChateau73}
J.~R. Cannon and P.~Duchateau.
\newblock Determining unknown coefficients in a nonlinear heat conduction
  problem.
\newblock {\em SIAM J. Appl. Math.}, 24:298--314, 1973.

\bibitem{CannonDuChateau83}
J.~R. Cannon and P.~DuChateau.
\newblock An inverse problem for an unknown source term in a wave equation.
\newblock {\em SIAM J. Appl. Math.}, 43:553--564, 1983.

\bibitem{ChengHofmannLu14}
J.~Cheng, B.~Hofmann, and S.~Lu.
\newblock The index function and {T}ikhonov regularization for ill-posed
  problems.
\newblock {\em J. Comput. Appl. Math.}, 265:110--119, 2014.

\bibitem{ChengYamamoto00}
J.~Cheng and M.~Yamamoto.
\newblock One new strategy for a priori choice of regularizing parameters in
  {T}ikhonov's regularization.
\newblock {\em Inverse Problems}, 16:L31--L38, 2000.

\bibitem{deHoopQiuScherzer12}
M.~V. de~Hoop, L.~Qiu, and O.~Scherzer.
\newblock Local analysis of inverse problems: {H}\"older stability and
  iterative reconstruction.
\newblock {\em Inverse Problems}, 28:045001, 16, 2012.

\bibitem{DuChateau81}
P.~Duchateau.
\newblock Monotonicity and uniqueness results in identifying an unknown
  coefficient in a nonlinear diffusion equation.
\newblock {\em SIAM J. Appl. Math.}, 41:310--323, 1981.

\bibitem{EggerEnglKlibanov05}
H.~Egger, H.~W. Engl, and M.~V. Klibanov.
\newblock Global uniqueness and {H}\"older stability for recovering a nonlinear
  source term in a parabolic equation.
\newblock {\em Inverse Problems}, 21:271--290, 2005.

\bibitem{EggerPietschmannSchlottbom14}
H.~Egger, J.-F. Pietschmann, and M.~Schlottbom.
\newblock Numerical identification of a nonlinear diffusion law via
  regularization in {H}ilbert scales.
\newblock {\em Inverse Problems}, 30:025004, 2014.

\bibitem{EggerPietschmannSchlottbom15}
H.~Egger, J.-F. Pietschmann, and M.~Schlottbom.
\newblock Identification of nonlinear heat conduction laws.
\newblock {\em J. Inverse Ill-Posed Probl.}, 23:429--437, 2015.

\bibitem{EggerSchlottbom11}
H.~Egger and M.~Schlottbom.
\newblock Efficient reliable image reconstruction schemes for diffuse optical
  tomography.
\newblock {\em Inverse Probl. Sci. Eng.}, 19:155--180, 2011.

\bibitem{EggerSchlottbom15}
H.~Egger and M.~Schlottbom.
\newblock Numerical methods for parameter identification in stationary
  radiative transfer.
\newblock {\em Comput. Optim. Appl.}, 62:67--83, 2015.

\bibitem{EnglHankeNeubauer96}
H.~W. Engl, M.~Hanke, and A.~Neubauer.
\newblock {\em Regularization of inverse problems}, volume 375 of {\em
  Mathematics and its Applications}.
\newblock Kluwer Academic Publishers Group, Dordrecht, 1996.

\bibitem{EnglKunischNeubauer89}
H.~W. Engl, K.~Kunisch, and A.~Neubauer.
\newblock Convergence rates for {T}ikhonov regularization of nonlinear
  ill-posed problems.
\newblock {\em Inverse Problems}, 5:523--540, 1989.

\bibitem{Evans98}
L.~C. Evans.
\newblock {\em Partial Differential Equations}, volume~19 of {\em Graduate
  Studies in Mathematics}.
\newblock American Mathematical Society, 1998.

\bibitem{GattiPata06}
S.~Gatti and V.~Pata.
\newblock A one-dimensional wave equation with nonlinear damping.
\newblock {\em Glasgow Math. J.}, pages 419--430, 2000.

\bibitem{Gronwall19}
T.~H. Gronwall.
\newblock Note on the derivatives with respect to a parameter of the solutions
  of a system of differential equations.
\newblock {\em Ann. of Math. (2)}, 20:292--296, 1919.

\bibitem{Guinot08}
V.~Guinot.
\newblock {\em Wave propagation in fluids: models and numerical techniques}.
\newblock ISTE and Wiley, London, Hoboken, 2008.

\bibitem{HeinHofmann05}
T.~Hein and B.~Hofmann.
\newblock Approximate source conditions for nonlinear ill-posed problems --
  chances and limitations.
\newblock {\em Inverse Problems}, 25:035003, 2009.

\bibitem{HofmannYamamoto10}
B.~Hofmann and M.~Yamamoto.
\newblock On the interplay of source conditions and variational inequalities
  for nonlinear ill-posed problems.
\newblock {\em Appl. Anal.}, 89:1705--1727, 2010.

\bibitem{ImanuvilovYamamoto01}
O.~Y. Imanuvilov and M.~Yamamoto.
\newblock Global {L}ipschitz stability in an inverse hyperbolic problem by
  interior observations.
\newblock {\em Inverse Problems}, 17:717--728, 2001.
\newblock Special issue to celebrate Pierre Sabatier's 65th birthday
  (Montpellier, 2000).

\bibitem{Isakov93}
V.~Isakov.
\newblock On uniqueness in inverse problems for semilinear parabolic equations.
\newblock {\em Archive for Rational Mechanics and Analysis}, 124:1--12, 1993.

\bibitem{Isakov06}
V.~Isakov.
\newblock {\em Inverse problems for partial differential equations}, volume 127
  of {\em Applied Mathematical Sciences}.
\newblock Springer, New York, second edition, 2006.

\bibitem{ItoJin15}
K.~Ito and B.~Jin.
\newblock {\em Inverse Problems: Tikhonov Theory and Algorithms}, volume~22 of
  {\em Series on Applied Mathematics}.
\newblock World Scientific, Singapore, 2015.

\bibitem{JosephPreziosi89}
D.~D. Joseph and L.~Preziosi.
\newblock Heat waves.
\newblock {\em Rev. Mod. Phys.}, 61:41--73, 1989.

\bibitem{Kaltenbacher07}
B.~Kaltenbacher.
\newblock Identification of nonlinear coefficients in hyperbolic {PDE}s, with
  application to piezoelectricity.
\newblock In {\em Control of coupled partial differential equations}, volume
  155 of {\em Internat. Ser. Numer. Math.}, pages 193--215. Birkh\"auser,
  Basel, 2007.

\bibitem{KlibanovTimonov}
M.~V. Klibanov and A.~Timonov.
\newblock {\em Carleman estimates for coefficient inverse problems and
  numerical applications}.
\newblock Inverse and Ill-posed Problems Series. VSP, Utrecht, 2004.

\bibitem{LagneseLeugeringSchmidt}
L.~E. Lagnese, G.~Leugering, and E.~J. P.~G. Schmidt.
\newblock {\em Modeling, Analysis and Control of Dynamic Elastic Multi-Link
  Structures}.
\newblock Systems \& Control: Foundations \& Applications. Springer
  Science+Business Media, New~York, 1994.

\bibitem{LandauLifshitz6}
L.~D. Landau and E.~M. Lifshitz.
\newblock {\em Course of theoretical physics. {V}ol. 6}.
\newblock Pergamon Press, Oxford, second edition, 1987.
\newblock Fluid mechanics, Translated from the third Russian edition by J. B.
  Sykes and W. H. Reid.

\bibitem{Lorenzi86}
A.~Lorenzi.
\newblock An inverse problem for a quasilinear parabolic equation.
\newblock {\em Ann. Mat. Pura Appl. (4)}, 142:145--169 (1986), 1985.

\bibitem{Pazy83}
A.~Pazy.
\newblock {\em Semigroups of linear operators and applications to partial
  differential equations}, volume~44 of {\em Applied Mathematical Sciences}.
\newblock Springer-Verlag, New York, 1983.

\end{thebibliography}

\appendix
\renewcommand\thesection{A}
\section*{Appendix}

We consider semilinear evolution problems of the abstract form
\begin{align} \label{eq:cauchy}
y'(t) + A y(t) = f(t,y(t)), \ t>0, \qquad y(0)=y_0,
\end{align}
where $X$ is a Banach space and $A:D(A) \subset X \to X$ is the generator of a $C^0$-semigroup 
of contractions on $X$ denoted by $\{e^{-At}\}_{t \ge 0}$. 
The analysis of such systems is well-established; see e.g. \cite[Ch~6.2]{Pazy83}.
We recall some basic results here for later reference.
\begin{lemma}[Mild solution]\label{lem:mild}
Let $f : [0,T] \times X \to X$ be continuous on $[0,T]$ with 
\begin{align*}
\|f(t)\|_X \le C_1 
\end{align*}
and assume that $f$ is uniformly Lipschitz-continuous with respect to $y$, i.e., 
 \begin{align*} 
   \|f(t,y) - f(t,z)\|_X \le L \|y-z\|_X \qquad \text{for all } y,z \in X \text{ and } t \in [0,T].
 \end{align*}
Then for any $y_0 \in X$ the Cauchy problem \eqref{eq:cauchy} has a unique mild solution $y \in C([0,T];X)$ defined by the \emph{variation-of-constants formula}
\begin{align} \label{eq:integral}
 y(t) = e^{-At} y_0 + \int_0^t e^{-A(t-s)} f(s,y(s)) ds.
\end{align}
The solution is bounded by $\|y\|_{C([0,T];X)} =\max_{0 \le t \le T} \|y(t)\|_X \le C'(T,C_1,L,\|y_0\|_X)$.  
\end{lemma}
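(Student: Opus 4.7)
The plan is to apply the Banach fixed point theorem to the integral operator
\[
(\Phi y)(t) := e^{-At} y_0 + \int_0^t e^{-A(t-s)} f(s,y(s)) \, ds
\]
on the Banach space $C([0,T];X)$ equipped with the supremum norm. By definition, a continuous fixed point of $\Phi$ is precisely a mild solution of \eqref{eq:cauchy} in the sense of \eqref{eq:integral}, so the existence and uniqueness statement reduces to a standard Picard argument that exploits (i) the uniform Lipschitz hypothesis on $f$ in the $y$-variable and (ii) the contractivity $\|e^{-At}\|_{X\to X} \le 1$ of the semigroup for all $t \ge 0$.

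First I would verify that $\Phi$ is a self-map of $C([0,T];X)$. For $y \in C([0,T];X)$, continuity of $f$ in $t$ together with its uniform Lipschitz continuity in $y$ implies that $s \mapsto f(s,y(s))$ is continuous from $[0,T]$ into $X$, hence Bochner integrable. Combined with the strong continuity of $\{e^{-At}\}_{t\ge 0}$, a routine splitting argument shows that $t \mapsto \int_0^t e^{-A(t-s)} f(s,y(s))\, ds$ is itself continuous into $X$, so $\Phi y \in C([0,T];X)$.

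For $y,z \in C([0,T];X)$ and every $t \in [0,T]$, the contractivity of the semigroup together with the Lipschitz bound on $f$ gives
\[
\|(\Phi y)(t) - (\Phi z)(t)\|_X \le \int_0^t \|f(s,y(s)) - f(s,z(s))\|_X\, ds \le L \int_0^t \|y(s)-z(s)\|_X\, ds.
\]
Iterating this inequality yields, by induction,
\[
\|(\Phi^n y)(t) - (\Phi^n z)(t)\|_X \le \frac{(Lt)^n}{n!}\, \|y-z\|_{C([0,T];X)},
\]
so $\Phi^n$ is a strict contraction on $C([0,T];X)$ once $n$ is large enough that $(LT)^n/n! < 1$. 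A standard corollary of the Banach fixed point theorem then produces a unique fixed point $y \in C([0,T];X)$ of $\Phi^n$, which is automatically the unique fixed point of $\Phi$, i.e., the unique mild solution.

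Finally, taking norms in \eqref{eq:integral}, using $\|e^{-A(t-s)}\|_{X\to X} \le 1$ and the pointwise bound $\|f(s,y(s))\|_X \le C_1$, immediately gives $\|y(t)\|_X \le \|y_0\|_X + C_1 T$, which yields the desired constant $C'$. The argument is entirely classical, modeled on \cite[Thm~6.1.2]{Pazy83}; the only mildly technical step is the verification that the integrand $s \mapsto e^{-A(t-s)} f(s,y(s))$ is a continuous $X$-valued map, which however follows directly from the strong continuity of $\{e^{-At}\}$ combined with the joint continuity of $f$, so I do not foresee any genuine obstacle.
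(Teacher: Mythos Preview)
Your proposal is correct and follows essentially the same route as the paper, which simply cites \cite[Ch~6, Thm~1.2]{Pazy83} for existence and uniqueness (i.e., exactly the Picard contraction argument you spell out) and then estimates $\|y(t)\|_X$ from the variation-of-constants formula. The only cosmetic difference is that the paper finishes with Gronwall's inequality rather than your direct bound $\|y_0\|_X + C_1 T$; this accommodates the reading $\|f(t,0)\|_X \le C_1$ (so that $\|f(t,y)\|_X \le C_1 + L\|y\|_X$ via the Lipschitz hypothesis) and explains the $L$-dependence of the constant $C'$ announced in the statement.
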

\begin{proof}
The existence of a unique mild solution is stated in \cite[Ch~6, Thm~1.2]{Pazy83}.
Since the semigroup is contractive, we have $\|e^{-As}\|_X \le 1$ for $s \ge 0$, and thus 
\begin{align*}
\|y(t)\|_X \le \|y_0\|_X + \int_0^t \|f(s,y(s))\|_X ds.
\end{align*}
The assertion now follows by using the bound for $f$ and the Gronwall inequality~\cite{Gronwall19}.
\end{proof}

The existence of classical solutions can be guaranteed under stronger assumptions.
\begin{lemma}[Classical solution] \label{lem:classical} $ $\\
Assume in addition that $f:[0,T] \times X \to X$ is continuously differentiable
with
 \begin{align*} 
   \|f_t(t,y)\|_X \le C_2 \qquad \text{and} \qquad \|f_y(t,y)\|_{X \to X} \le C_3 
 \end{align*}
for all $y \in X$ and $t \in [0,T]$. 
Then for any $y_0 \in D(A)$
the mild solution of Lemma~\ref{lem:mild} is a classical solution of \eqref{eq:cauchy}, i.e.,  $y \in C^{1}([0,T];X) \cap C([0,T];D(A))$, and 
\begin{align*}
 \|y\|_{C^{1}([0,T];X) \cap C([0,T];D(A))} \le C''(T,C_1,C_2,C_3,\|y_0\|_X+\|A y_0\|_X). 
\end{align*}
\end{lemma}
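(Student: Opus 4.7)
The plan is to reduce the nonlinear Cauchy problem to the regularity theory for linear inhomogeneous Cauchy problems, exploiting the enhanced smoothness of $f$ together with the assumption $y_0 \in D(A)$. First I would invoke Lemma~\ref{lem:mild} to obtain the mild solution $y \in C([0,T];X)$ and set $g(t) := f(t,y(t))$. Using the variation-of-constants formula \eqref{eq:integral} together with contractivity of the semigroup and the identity $(e^{-At}-e^{-As}) y_0 = -\int_s^t e^{-A\tau} Ay_0\, d\tau$, which is valid precisely because $y_0 \in D(A)$, one sees that $y$ itself is Lipschitz continuous on $[0,T]$ with constant depending only on $\|y_0\|_X+\|Ay_0\|_X$, $C_1$, $L$ and $T$. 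Combined with the uniform bounds on $f_t$ and $f_y$, this shows that $g$ is Lipschitz continuous from $[0,T]$ into $X$.

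Next, I would view $y$ as a mild solution of the \emph{linear} inhomogeneous Cauchy problem $z'(t)+Az(t)=g(t)$ with initial value $z(0)=y_0 \in D(A)$ and Lipschitz forcing $g$. Standard semigroup theory for the linear inhomogeneous equation, as in \cite[Ch.~4]{Pazy83}, then guarantees that this mild solution is in fact a classical solution, i.e.\ $y \in C^{1}([0,T];X) \cap C([0,T];D(A))$, and that the abstract equation $y'(t)+Ay(t)=f(t,y(t))$ holds pointwise in $X$.

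Finally, for the uniform bound I would differentiate the variation-of-constants representation in time, using an integration by parts to move the derivative from the semigroup to the integrand. The integrand then contains $f_t(s,y(s)) + f_y(s,y(s)) y'(s)$, which by hypothesis is bounded in norm by $C_2 + C_3 \|y'(s)\|_X$. Combined with contractivity of $e^{-At}$, this yields a Gronwall inequality for $\|y'(t)\|_X$, from which a bound on $\|Ay(t)\|_X$ follows via the relation $Ay(t) = f(t,y(t)) - y'(t)$ and the a~priori bound on $y$ from Lemma~\ref{lem:mild}. The main obstacle is the first step: establishing the Lipschitz continuity of $g$ without circularity requires the strengthened hypothesis $y_0 \in D(A)$; without it the mild solution need not be more than continuous in $t$, and the bootstrap to a classical solution breaks down.
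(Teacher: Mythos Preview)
Your proposal is essentially the paper's strategy: obtain the classical regularity from semigroup theory, then bound $y'$ via the time-differentiated equation. Indeed, your integration by parts in the variation-of-constants formula produces precisely the mild-solution representation for $z=y'$ solving $z'+Az = f_t(t,y)+f_y(t,y)\,z$ with $z(0)=f(0,y_0)-Ay_0$, which is exactly the system the paper writes down and then feeds into Lemma~\ref{lem:mild}; the bound on $\|Ay(t)\|$ via $Ay=f-y'$ is also identical. The only cosmetic difference is that for the regularity statement the paper cites \cite[Ch.~6, Thm.~1.5]{Pazy83} directly rather than reducing, as you do, to the linear inhomogeneous theory of \cite[Ch.~4]{Pazy83}.

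One small correction to your first step: the Lipschitz constant for $t\mapsto y(t)$ cannot depend only on $\|y_0\|_X+\|Ay_0\|_X$, $C_1$, $L$, and $T$. The identity $(e^{-At}-e^{-As})y_0=-\int_s^t e^{-A\tau}Ay_0\,d\tau$ handles the homogeneous part, but for the Duhamel integral you still have to control an increment of the form $f(s+h,y(s))-f(s,y(s))$ in the Gronwall argument for $\|y(t+h)-y(t)\|$, and this requires the bound $C_2$ on $f_t$. Without some $t$-regularity of $f$ the bootstrap to Lipschitz continuity of $y$ (and hence of $g$) does not start; once $C_2$ is included, your argument goes through.
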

The norm here is given by $\|y\|_{C^{1}([0,T];X) \cap C([0,T];D(A))} = \max\limits_{0 \le t \le T} \|y(t)\|_X + \|y'(t)\|_X + \|A y(t)\|_X$.
\begin{proof}
The regularity statement can be found in \cite[Ch~6, Thm~1.5]{Pazy83}. 
By formal derivation of \eqref{eq:cauchy} with respect to time, one obtains the linear system
\begin{align*} 
z'(t) + A z(t) = f_t(t,y(t)) + f_y(t,y(t)) z(t), \qquad z(0)=z_0, 
\end{align*}
with $z(t)=y'(t)$ and $z_0=y'(0)=f(0,y_0)-A y_0$. 
The previous lemma provides existence of a mild solution $z$ and the bound $\|y'(t)\|_X=\|z(t)\|_{X} \le C$. The assertion then follows by making use of 
equation \eqref{eq:cauchy}.
\end{proof}


The following result provides an alternative way for proving existence of a classical solution.
Let us define $Y=D(A)=\{y \in X : A y \in X\}$, which is again a Banach space when equipped with the norm 
$\|y\|_Y^2 = \|y\|_X^2 + \|A y\|_X^2$. 

\begin{lemma}[Local classical solution] \label{lem:classical2}
Let $f : [0,T] \times Y \to Y$ be continuous with
 \begin{align} \label{eq:FF1}
    \|f(t,0)\|_Y \le C_4 \qquad \text{for all } t \in [0,T]
 \end{align}
and assume that $f$ is locally Lipschitz-continuous with respect to $y$ uniformly in $t$, 
i.e., for all $C>0$ and $y,z \in Y$ with $\|y\|_Y,\|z\|_Y \le C$ there exists $L_C<\infty$ such that 
\begin{align} \label{eq:FF2}
   \|f(t,y) - f(t,z)\|_Y \le L_C \|y-z\|_Y \qquad t \in [0,T].
 \end{align}
Then for any $y_0 \in Y$ problem \eqref{eq:cauchy} has a unique 
local classical solution $y \in C([0,T'];Y)$ and 
$\|y\|_{C([0,T'];Y)} \le C'''$ with $T'$ and $C'''$ only depending on $C$, $L_C$, and $C_4$.

\end{lemma}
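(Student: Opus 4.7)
The plan is to run a Banach fixed-point argument for the variation-of-constants integral equation \eqref{eq:integral} \emph{directly on the Banach space $Y$}, exploiting that the restriction of $\{e^{-At}\}_{t\ge 0}$ to $Y=D(A)$ is again a $C^0$-semigroup of contractions on $(Y,\|\cdot\|_Y)$. This preliminary observation is the starting point: since $e^{-At}$ commutes with $A$ on $D(A)$ and is an $X$-contraction, one has
\[ \|e^{-At} y\|_Y^2 = \|e^{-At} y\|_X^2 + \|e^{-At}Ay\|_X^2 \le \|y\|_X^2 + \|Ay\|_X^2 = \|y\|_Y^2, \]
and strong continuity on $Y$ follows from strong continuity on $X$ applied to $y$ and to $Ay$ separately.

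Next, I would fix a radius $R>0$, set $C=\|y_0\|_Y+R$ and write $L:=L_C$ for the local Lipschitz constant from \eqref{eq:FF2}. On the closed set
\[ B_R=\bigl\{y\in C([0,T'];Y)\colon \sup_{t\in[0,T']}\|y(t)-y_0\|_Y\le R\bigr\}, \]
which is complete in the sup-norm, I define the fixed-point map
\[ (\Phi y)(t)=e^{-At}y_0+\int_0^t e^{-A(t-s)}f(s,y(s))\,ds. \]
For $y\in B_R$, the bound $\|f(s,y(s))\|_Y\le C_4+L C$ follows from \eqref{eq:FF1}--\eqref{eq:FF2}, so
\[ \|(\Phi y)(t)-y_0\|_Y \le \|e^{-At}y_0-y_0\|_Y + T'(C_4+L C). \]
Using strong continuity of the semigroup on $Y$, both terms can be made smaller than $R/2$ for $T'$ sufficiently small, giving $\Phi(B_R)\subset B_R$. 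The contractivity of $e^{-At}$ on $Y$ together with \eqref{eq:FF2} yields the estimate $\|\Phi y-\Phi z\|_{C([0,T'];Y)}\le T' L\,\|y-z\|_{C([0,T'];Y)}$, which becomes a strict contraction once $T' L<\tfrac12$. The Banach fixed-point theorem then produces a unique $y\in B_R$ solving the integral equation, and the construction yields $\|y\|_{C([0,T'];Y)}\le C'''$ with $C'''$ depending only on $\|y_0\|_Y$, $C_4$ and $L_C$.

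Finally, to promote this mild solution to a classical solution in the sense of Lemma~\ref{lem:classical}, I would invoke the standard regularity result for inhomogeneous Cauchy problems (Pazy, Ch.~4, Thm.~2.4): because $y_0\in D(A)$ and the inhomogeneity $s\mapsto f(s,y(s))$ lies in $C([0,T'];Y)=C([0,T'];D(A))$, the variation-of-constants formula defines a function $y\in C^1([0,T'];X)\cap C([0,T'];D(A))$ that satisfies $y'(t)+Ay(t)=f(t,y(t))$ pointwise on $[0,T']$. Uniqueness within $C([0,T'];Y)$ is already built into the fixed-point step.

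The main obstacle, compared to the globally Lipschitz setting of Lemma~\ref{lem:mild}, is that the Lipschitz constant in \eqref{eq:FF2} is only \emph{local} on $Y$, which rules out a global contraction on $[0,T]$ and forces the ball-plus-small-$T'$ strategy; the resulting existence is therefore only \emph{local}. A second subtle point is the transfer from $X$-analysis to $Y$-analysis, which rests on the commutation $Ae^{-At}=e^{-At}A$ on $D(A)$; without it the semigroup would not be a contraction on $(Y,\|\cdot\|_Y)$ and the bounds for $\Phi$ would break down.
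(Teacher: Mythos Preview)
Your approach is essentially the same as the paper's: both rest on the observation that $\{e^{-At}\}$ restricts to a $C^0$-contraction semigroup on $Y=D(A)$ and then run the mild-solution/fixed-point argument directly on $Y$; the paper simply cites \cite[Ch.~6, Thm.~1.7]{Pazy83} for this step while you spell out the Picard iteration.

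One small point worth tightening: as written, your $T'$ depends on how fast $\|e^{-At}y_0-y_0\|_Y\to 0$, which is a property of the particular $y_0$ and not only of $\|y_0\|_Y$, so it does not quite deliver the claimed dependence of $T'$ on $C$, $L_C$, $C_4$ alone. The standard fix is to center the ball at the free evolution $t\mapsto e^{-At}y_0$ instead of at the constant $y_0$; then the self-map estimate reads $\|(\Phi y)(t)-e^{-At}y_0\|_Y\le T'(C_4+L_C C)$ and $T'$ depends only on the listed quantities. This uniformity in $y_0$ is precisely what the continuation argument behind Lemma~\ref{lem:classical3} needs.
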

\begin{proof}
The assertion is a consequence of Lemma~\ref{lem:mild} and the fact, 
that $\{e^{-At}\}_{t \ge 0}$  also defines a contraction semigroup on $Y$; see  \cite[Ch~6, Thm~1.7]{Pazy83}
for details.
\end{proof}

The following result allows to deduce global existence and a-priori bounds.
\begin{lemma}[Global classical solution] \label{lem:classical3} %
In addition assume that
\begin{align*} 
\|\frac{d}{dt} f(t,y(t))  \|_X \le C_1 + C_2 \|\dt y(t)\|_X + C_3 \|A y(t)\|_X,
\end{align*}
for any $y \in C^1([0,T];X) \cap C([0,T];D(A))$. 
Then the solution of Lemma~\ref{lem:classical2}, 
can be extended to the whole interval $[0,T]$ and bounded as in Lemma~\ref{lem:classical}.
\end{lemma}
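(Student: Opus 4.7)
The plan is to combine the local existence from Lemma~\ref{lem:classical2} with a standard continuation argument: if $[0,T^*)$ denotes the maximal interval of existence of the classical solution in $Y$, it suffices to derive a uniform a priori bound $\|y(t)\|_Y \le C'''$ depending only on $T$, the constants $C_1,C_2,C_3$ and the quantities $\|y_0\|_X+\|Ay_0\|_X$ and $\|f(0,y_0)\|_X$. Such a bound rules out blow-up in $Y$ and therefore forces $T^* = T$, with the announced bound on the whole interval $[0,T]$ following automatically.

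Next I would derive the key energy estimate by differentiating the equation in time. Since the local solution satisfies $y \in C^1([0,T'];X)\cap C([0,T'];Y)$, the function $z := y'$ solves the linear inhomogeneous abstract Cauchy problem $z'(t) + A z(t) = \tfrac{d}{dt} f(t, y(t))$ with $z(0) = f(0,y_0) - A y_0 \in X$. Variation of constants and contractivity of $\{e^{-At}\}$ give
\begin{align*}
\|z(t)\|_X \le \|z(0)\|_X + \int_0^t \bigl\| \tfrac{d}{ds} f(s, y(s)) \bigr\|_X \, ds,
\end{align*}
into which I would insert the structural hypothesis to obtain $\|z(t)\|_X \le \|z(0)\|_X + \int_0^t \bigl(C_1 + C_2 \|z(s)\|_X + C_3 \|Ay(s)\|_X\bigr) ds$. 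The evolution equation furnishes $A y = f(t,y)-z$, so $\|A y(t)\|_X \le \|z(t)\|_X + \|f(t,y(t))\|_X$. To close the loop I would bound the last term by observing that $f(\cdot, y(\cdot))$ is absolutely continuous into $X$, so
\begin{align*}
\|f(t,y(t))\|_X \le \|f(0,y_0)\|_X + \int_0^t \bigl\| \tfrac{d}{ds} f(s,y(s)) \bigr\|_X \, ds,
\end{align*}
which is controlled by exactly the same integrand as $\|z(t)\|_X$.

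Adding the two inequalities and substituting $\|Ay\|_X \le \|z\|_X + \|f\|_X$ into the structural bound, the quantity $\phi(t) := \|z(t)\|_X + \|f(t,y(t))\|_X$ satisfies a closed scalar integral inequality of the form $\phi(t) \le \phi(0) + C'T + C''\int_0^t \phi(s)\,ds$ on every subinterval of the existence interval. Gronwall's lemma then produces a bound $\phi(t) \le C$ independent of $T^*$, and uniform bounds on $\|y'(t)\|_X$ and $\|Ay(t)\|_X$, hence on $\|y\|_{C([0,T];Y)}$, follow immediately. The main obstacle is exactly this closure step: the local Lipschitz hypothesis of Lemma~\ref{lem:classical2} only bounds $\|f(t,y)\|_Y$ in terms of $\|y\|_Y$, which is circular for an extension argument that aims to control $\|y\|_Y$ in the first place. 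The additional assumption on $\tfrac{d}{dt} f$ is precisely what allows $\|f(t,y(t))\|_X$ to be controlled \emph{along the solution trajectory}, without invoking any Lipschitz constant on growing balls of $Y$, thereby breaking the loop.
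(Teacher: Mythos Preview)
Your proposal is correct and follows the same route as the paper: differentiate the equation in time, apply the variation-of-constants formula to $z=y'$, insert the structural hypothesis on $\tfrac{d}{dt}f(t,y(t))$, and close via Gronwall. The paper's own argument is in fact terser than yours and leaves the control of the $\|Ay\|_X$ term implicit, whereas you make the closure explicit through $Ay = f(t,y)-z$ and the fundamental theorem of calculus applied to $t\mapsto f(t,y(t))$; this extra care is exactly what the paper's sketch is tacitly relying on.
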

\begin{proof}
By Lemma~\ref{lem:classical2}, a local classical solution $y$ exists. 
Formal differentiation of \eqref{eq:cauchy} shows that 
the derivative $z(t)=y'(t)$ satisfies
\begin{align*}
z'(t) + A z(t) = g(t,z(t)), \qquad z(0)=z_0  
\end{align*}
with $z_0 = y'(0)$ and $g(t,z(t))=\ddt f(t,y(t))$.
Since $y$ is a classical solution, we see that a mild solution $z=\dt y$ exists.
Using the variation of constants formula \eqref{eq:integral} for $z$ yields
\begin{align*}
\|z(t)\|_X \le \|e^{-At}\|_X \|z_0\|_X + \int_0^t \|e^{-A(t-s)}\|_X \|\ddt f(s,y(s))\|_X ds.
\end{align*}
Hence the classical solution $y$ of \eqref{eq:cauchy} is uniformly which implies the assertion.
\end{proof}

\end{document}